\newcounter{todocounter}
\def\timenow{\@tempcnta\time
\@tempcntb\@tempcnta
\divide\@tempcntb60
\ifnum10>\@tempcntb0\fi\number\@tempcntb
:\multiply\@tempcntb60
\advance\@tempcnta-\@tempcntb
\ifnum10>\@tempcnta0\fi\number\@tempcnta}
\patchcmd{\@chapter}{#1}{#2}{}{} 
\patchcmd{\@chapter}{#1}{#2}{}{} 
\patchcmd{\@chapter}{#1}{#2}{}{}
\patchcmd{\@sect}{\fi#7}{\fi#8}{}{} 
\patchcmd{\@sect}{\fi#7}{\fi#8}{}{} 
\newtheorem{thm}{Theorem}[section] 
\newtheorem{prop}[thm]{Proposition}
\newtheorem{lemma}[thm]{Lemma}
\newtheorem{dfn}[thm]{Definition}
\newtheorem{properties}[thm]{Properties}
\theoremstyle{definition}
\newtheorem{exam}[thm]{Example}
\newtheorem{remark}[thm]{Remark}
\numberwithin{equation}{section}
\renewcommand\epsilon{\varepsilon}
\newcommand \esssup{\mbox{\upshape ess}\sup}
\newcommand \E{\mathbb{E}}
\newcommand \Esp[1]{\E\left[#1\right]}
\renewcommand \P{\mathbb{P}}
\newcommand \Prob[1]{{\P\left(#1\right)}}
\newcommand \dd{{\rm d}}
\newcommand \dx{\dd x}
\newcommand \dy{\dd y}
\newcommand \dz{\dd z}
\newcommand \N{\mathbb{N}}
\newcommand \R{\mathbb{R}}
\newcommand \cA{\mathcal{A}}
\newcommand \cE{\mathcal{E}}
\newcommand \cF{\mathcal{F}}
\newcommand \cG{\mathcal{G}}
\newcommand \cH{\mathcal{H}}
\newcommand \cN{\mathcal{N}}
\newcommand \cI{\mathcal{I}}
\newcommand \cJ{\mathcal{J}}
\newcommand \cL{\mathcal{L}}
\newcommand \cP{\mathcal{P}}
\newcommand \RareSetnew{{E_{0}}}
\newcommand \1{\mathbf{1}}
\newcommand \PZj{\P_{Z_j}}
\newcommand \emzon{A_{Z_{1:n}}}
\newcommand \emzJ{A_{Z_{J}}}
\newcommand \emzsmaJ{A_{z_{_{J}}}}
\newcommand\emxon{A_{X_{1:n}}}
\newcommand\emxyon{A_{(X,Y)_{1:n}}}
\newcommand\emxJ{A_{X_{J}}}
\newcommand \emzJsta{A_{Z_{J}^{*}}}
\newcommand \emxyJsta{A_{(X,Y)_{J}^{*}}}
\newcommand \mmzon{\mu_{Z_{1:n}}}
\newcommand \mmxon{\mu_{X_{1:n}}}
\newcommand \mmxyon{\mu_{(X,Y)_{1:n}}}
\newcommand \mmxyJsta{\mu_{(X,Y)_{J}^{*}}}
\newcommand \mmzJ{\mu_{Z_{J}}}
\newcommand \mmxJ{\mu_{X_{J}}}
\newcommand{\regk}{\Phi_k} 
\newcommand{\reg}{{ \Phi}_{1:n}}
\newcommand{\regave}{\Phi_{n}}
\newcommand{\hatreg}{\widehat \Phi_n}
\newcommand \bgf{{g}_{f,1:n}}
\newcommand \bgfon{{g}_{f,1:n}}
\newcommand \bgfJ{{g}_{f,J}}
\newcommand \frackn{\frac{1}{n}\sum_{k=1}^{n}}
\newcommand\wavydecor{%
    \draw[decoration={coil,aspect=0.1,segment length=5pt,amplitude=1.0pt},decorate,line width=1.5pt,black]
      (O|-P) -- (O);
}
\newmdenv[
hidealllines=true,
innerleftmargin=10pt,
innerrightmargin=0pt,
innertopmargin=0pt,
innerbottommargin=0pt,
leftmargin=-10pt,
skipabove=.5\baselineskip,
skipbelow=.5\baselineskip,
singleextra={\wavydecor},
firstextra={\wavydecor},
secondextra={\wavydecor},
middleextra={\wavydecor}
]{done}
\newcommand \nch[1]{{\color{black}#1}}
\title{\nch{Generalization bounds for  nonparametric regression  with $\beta-$mixing samples}} 
\author{David Barrera  \thanks{Corresponding author.} \thanks{Email: {\tt juandavid.barreracano@epfl.ch}. CMAP, \'{E}cole Polytechnique, Route de Saclay, 91128 Palaiseau cedex, France and SDS, \'{E}cole Polytechnique F\'{e}d\'{e}rale de Lausanne. EPFL SB MATH
MA C2 647 (B\^{a}timent MA) 
Station 8
CH-1015 Lausanne,
Switzerland. Supported in 2019 by the Chaire March\'{e}s en Mutation, F\'{e}d\'{e}ration Fran\c{c}aise Bancaire and by the Institut Louis Bachelier.} 
\and
Emmanuel Gobet \thanks{Email: {\tt emmanuel.gobet@polytechnique.edu}. CMAP, Ecole Polytechnique, Route de Saclay, 91128 Palaiseau cedex, France. The authors research is  part of the Chair {\it Financial Risks} of the {\it Risk Foundation} and the {\it  Finance for Energy Market Research Centre}. This research also benefited from the support of the Chair {\it Stress Test, RISK Management and Financial Steering}, led by the French \'{E}cole Polytechnique and its Foundation and sponsored by BNP Paribas.}}
\date{}
\begin{document} 
 \maketitle
\thispagestyle{empty}
\begin{abstract}
In this paper we present a series of results that permit to extend in a direct manner uniform deviation inequalities of the empirical process from the independent to the dependent case characterizing the additional error in terms of $\beta-$mixing coefficients associated to the training sample. We then apply these results to some previously obtained inequalities for independent samples associated to the deviation of the least-squared error in nonparametric regression to derive corresponding generalization bounds for regression schemes in which the training  sample  may not be independent. 

These  results  provide  a framework to analyze  the error associated to regression schemes whose training sample comes from a large class of  $\beta-$mixing sequences, including geometrically ergodic Markov samples, using only the independent case. More generally, they permit a meaningful extension of the Vapnik-Chervonenkis and similar theories for independent training samples to this class of $\beta-$mixing samples. 
 \end{abstract}
\newpage
\setcounter{tocdepth}{2}
{
  \hypersetup{linkcolor=black}
 \tableofcontents
}
\section{Introduction and background}
\label{secint}
This paper is a continuation of \cite{bargob}, where we addressed the problem of  studying the error associated to a least-squares regression scheme in the nonparametric, distribution--free setting assuming that the training sample is independent.

\subsection{The problem} 

Let $n\in \N:=\{1,2,\dots\}$ be a natural number (the ``sample size'')
, let the ``training sample'' of ``explanatory inputs'' $X_{k}$ and ``responses'' $Y_{k}$
\begin{align}
D_{n}:=((X_{k},Y_{k}))_{k\in \{1,\dots,n\}}
\end{align}
 be a (not necessarily i.i.d.) random 
sequence in $S\times \R$, where $S$ is a Polish space, defined on the probability space $(\Omega, \cE, \P)$ , and let $\cF_{n}$ be a family of Borel-measurable functions $S\to \R$ (the ``space of hypotheses''). For $k\in \{1,\dots, n\}$, denote by $\P_{X_{k}}$ [respectively $\P_{(X_{k}, Y_{k})}$ ] the law of $X_{k}$ [respectively $(X_{k},Y_{k})$], assume that $Y_{k}\in L^{2}_{\P_{X_{k}}}$, and let $\regk:S\to \R$ be a version of the conditional expectation of $Y_{k}$ given $X_{k}$, thus
\begin{align}
\label{equfirintregk}
\regk(X_{k})=\Esp{Y_{k}|X_{k}}, \,\,\, \P-a.s.
\end{align}

Given such $(n,D_{n},\cF_{n})$, a natural candidate to a ``simultaneous'' estimator within $\cF_{n}$ of the regression functions $\regk$ 
 is the empirical regression function $\hatreg$ defined as a solution to the {\it least-squares regression problem} 
\begin{align}
\label{equdefhatregfir}
\hatreg\in \arg\min_{f\in \cF_{n}}\frackn|f(X_{k})-Y_{k}|^{2}.
\end{align}
Indeed, by the orthogonal decomposition 
\begin{align}
\Esp{|Y_{k}-f(X_{k})|^{2}}=\Esp{|Y_{k}-\regk(X_{k})|^{2}}+\Esp{|f(X_{k})-\regk(X_{k})|^{2}},
\end{align}
the solutions $\regave^{*}$ to the problem
\begin{align}
\label{equdefregavefir}
\regave^{*}\in \arg\min_{f\in\cF_{n}}\frackn\Esp{|f(X_{k})-\regk(X_{k})|^{2}}
\end{align}
 are the same as those to the problem 
 \begin{align}
\label{equdefhatregsim}
\regave^{*}\in \arg\min_{f\in \cF_{n}}\frackn\Esp{|f(X_{k})-Y_{k}|^{2}},
\end{align}
from where it follows that \eqref{equdefhatregfir} and \eqref{equdefregavefir} are approximately the same problem  {\it provided that the deviations of the random variables inside the $\arg\min$ in \eqref{equdefhatregfir} from their expectations inside the $\arg\min$ of \eqref{equdefhatregsim} are {\upshape(in some appropriate sense)} ``negligible''  uniformly in $\cF_{n}$
}. 

In this context, the purpose of \cite{bargob}\footnote{Where we assumed $S=\R^{d}$, which is nonetheless largely irrelevant for the arguments.} was roughly speaking to show that, {\it when  $D_{n}$ is a sequence of independent random variables}, such deviations can be properly controlled {provided a control on the complexity of $\cF_{n}$}\footnote{As measured typically by uniform entropy estimates, see Definition \ref{defl1entest}.} {and a uniform bound of the response variables $Y_{k}$}, and to describe some of the consequences of these controls for the problem of (weak and strong) rates and consistency, including the case where the response sequence $(Y_{k})_{k}$ is not bounded. The innovation in  \cite{bargob} with respect to the classical i.i.d. case is, therefore, in the non-stationarity of $D_{n}$.

{In continuation with this, we aim here at deriving some bounds for the probability of uniform deviations like
\begin{align}
\label{equgenunidev}
\Prob{\sup_{(g_1,\dots,g_n)\in 
\cG_{1,\dots,n}
} \frac 1 n\sum_{j=1}^n \left(a g_j(X_j,Y_{j}) + b\int g_j(x,y)\P_{X_j,Y_{j}}(\dx \,\dy)\right)\geq t}
\end{align}
when the training data $D_{n}$ is not necessarily stationary, {\it nor} independent, but  satisfies some $\beta-$mixing properties (particularly those in Definitions \ref{defsubbetmix} and \ref{defsubpolmix}).  Here $a,b,t$ are scalar,  $\cG_{1,\dots,n}$ is a family of vectors $(g_{1},\dots, g_{n})$ whose entries are measurable functions $S\times \R \to \R$, and the complexity of $\cG_{1,\dots,n}$ is controlled in the same ways as in \cite{bargob}.
}

We will show here how to ``lift'' the deviation inequalities in \cite{bargob} from the independent to the dependent case using decoupling techniques associated to the $\beta-$mixing coefficients of the training sample, and we will generalize some of the  consequences for weak consistency and bounds on weak errors obtained in \cite{bargob} for independent training samples using these ideas. When interpreted in the Markovian setting, these results provide error rates and consistency theorems for least-squares regression schemes under important ergodicity conditions on  $D_{n}$. See for instance \cite{tuotwe}, \cite{jarrob}, 
 \cite{douforgui},  and the references therein.

\subsection{Motivation}
Our study is motivated {in particular} by the following application.  In \cite{fort:gobe:moul:16}, the authors investigate the  numerical computation of the mean of a function of a conditional
expectation in a rare-event regime, which takes the form

\begin{equation}
\label{eq:pb:base}
\cI := \Esp{f(
\nch{\tilde{X}},\Esp{Y\vert \tilde{X}
})\vert \nch{\tilde{X}\in \RareSetnew}},
\end{equation}
where \nch{$\tilde{X}$} and $Y$ are random variables, and the event \nch{$\RareSetnew\in \cE$} is rare (i.e. $\P(\nch{\tilde{X}\in \RareSetnew
})$ small). This problem is prominent in financial/actuarial risk management  when, as often, one has to deal with future risk exposure (modelled by $\Esp{Y \vert \tilde{X}
}
=:\Phi(\tilde{X})$) in extreme configurations (described by the set $\RareSetnew$). The above can be rewritten as $\cI=\Esp{f(X,\Esp{Y\vert X})}$ where $X$ \nch{has} the conditional distribution of $\tilde{X}$ given \nch{$\{\tilde{X}\in \RareSetnew \}$}. The computational strategy developed in \cite{fort:gobe:moul:16} consists in sampling $n$ times $(X,Y)$, computing the empirical regression function $\hat \Phi_n(x)\approx\Esp{Y\vert {X}=x}$ with these data, and averaging out the results over the \nch{explanatory} sample $X_1,\dots, X_n$.  One specific issue is that, \nch{$\RareSetnew$}  being rare, naive i.i.d. sampling of $X$ (with acceptance-rejection on \nch{$\RareSetnew$} ) is quite inefficient and one has to resort 
\nch{to} a MCMC technique. The new $X_1,\dots, X_n$ are thus not independent, nor stationary, but they fulfill some good $\beta-$mixing properties to ensure the approximation with respect to the (target) distribution of $X$. The convergence analysis is developed in \cite{fort:gobe:moul:16} and a upper bound  on the Mean Square empirical norm 
$$ \Esp{ \frac{1}{n} \sum_{j=1}^n \left( \hat \Phi_n(X_j) - \Phi(X_j)\right)^2 }$$
is derived.  

Using the current results of this work, we will be able to extend the scope of validity of the error analysis \nch{in \cite{fort:gobe:moul:16}} in two directions: first, allowing the  functions class for computing $\hat \Phi$ to be \nch{more} general (and not only \nch{a} linear space as in \cite{fort:gobe:moul:16}),  including   neural networks for instance; second, estimating the out-of sample error (as opposed to the in-sample error -- aka empirical error).

\subsection{Contributions of this paper}
The results in this paper contribute to the existing literature mainly in two directions,

\begin{enumerate}
\item {\it A systematic presentation of the  ``lifting'' of uniform deviation inequalities  via Berbee's lemma.} This occupies Section \ref{secbribetmixind}, whose main results are  Theorems \ref{theconbet} and Proposition \ref{prop_corsemabsexp}. 

While the main purpose of this part of the paper is to permit a smooth and clear transition from some of the results under independence treated in \cite{bargob} to the corresponding generalizations to dependence with   $\beta-$mixing errors (achieved in Section \ref{secapp}), we aimed to present the results in this section in a manner that makes clear how these ideas go far beyond in generality than the kind of applications for which they are developed here.  In this sense, we hope that they might serve as a useful reference for other works in which deviation inequalities for nonindependent sequences are sought for, provided that their independent counterparts are known or clearly obtainable.

\item {\it Weak rates and consistency theorems for least-squares regression schemes with nonindependent training samples.} This part, developed in Section \ref{secapp},  consists in an application of the results from Section \ref{secbribetmixind}  to some of the results and proofs in \cite{bargob}. 

The conclusions obtained (see for instance Theorems \ref{the22bargobbetmix} and \ref{theweaerrestbet}) allow us to see how  some the estimates obtained in \cite{bargob} for independent samples generalize to estimates for dependent samples\footnote{ It is important to emphasize that, by reasons of space, in this process of generalizing we  did not exhaust all the results available in \cite{bargob}. 
The arguments for those treated here indicate how to extend the ones left aside.} via the results from Section \ref{secbribetmixind}.  These estimates are   meaningful for a class of training samples with a kind of ``superlinear $\beta-$mixing rate'' (see \eqref{equbetmixo1oven} and  \eqref{equdefquabetmixfas}), providing in particular non-parametric, distribution-free estimates for geometrically ergodic Markovian training samples. 
\end{enumerate}

\subsection{Background literature}
 Concentration and deviation inequalities for nonindependent samples constitute a topic of considerable research, in particular due to the importance of the Markovian case at the level of applications.   

We start by mentioning  \cite{ren:moji:10},  which uses basically  the same coupling ideas developed in the present paper\footnote{Our developments were indeed considerably inspired by the argument in \cite{ren:moji:10}.} to extend some of  the inequalities in \cite{gyor:kohl:krzy:walk:02} for i.i.d. samples to the stationary $\beta-$mixing case and to describe the respective consequences for estimates of weak errors of least--squares regression schemes, including some penalisations. 
Our results give estimates that cover in the nonstationary case the corresponding estimates in \cite{ren:moji:10} with a very significant improvement on the constants involved. These gains come in   part from the work developed in \cite{bargob}.

We also mention \cite{adam:08} (see also references therein). 
This paper presents first a  deviation estimate (\cite[Theorem 4]{adam:08}) for independent samples under the assumptions that the functions in the space of hypotheses are centered with respect to the marginal laws of the sample and satisfy some bounds in terms of Orlicz norms,  and then develops similar estimates (\cite[Theorems 6 and 7]{adam:08}) for uniformly bounded Markov samples under a certain  ``minorization condition'' (\cite[Section 3.1]{adam:08}).  In contrast with our results, the estimates for independent samples in \cite{adam:08} cover cases in which the family of hypotheses is {not} uniformly bounded. Our estimates, on the other side, do not require the centering of the hypotheses with respect to the marginal laws in the independent case, and give rates for any exponentially $\beta-$mixing sequence of samples even if it is not Markovian, covering in particular the geometrically ergodic Markov chains in \cite{adam:08}.  We point out also that our applications (mainly Theorem \ref{the22bargobbetmix}) give bounds which are upper estimates on the probability of {\it some} individual large  deviation of the empirical processes parametrized by the family of hypotheses    from its corresponding mean, whereas the uniform estimates in \cite{adam:08} (\cite[Theorems 4 and  7]{adam:08}) are rather estimates on the probability of  deviations of the  supremum of these empirical process from its mean: we will refer to   these as ``tail estimates'' in the rest of this section.

In \cite{KM16}, a coupling argument  similar to the one in the present paper  is used to address the problem of generalisation bounds for unspecified loss functions of regression algorithms  in term of Rademacher complexities and $\beta-$mixing coefficients associated to  dependences in the training sample, in a setting whose generality is approximately the same as that in our Section \ref{secbribetmixind}. The  argument in \cite{KM16}, which  proceeds via McDiarmid's inequality (see footnote \ref{foonotmcdia} below), has the advantage of  simplicity and generality compared to ours, but the rate obtained (roughly speaking ${1}/{\sqrt{n}}$ where $n$ is the sample size) is suboptimal for the (square) loss function considered in our paper (we obtain roughly the rate $\log n /n$ in our analysis). For further comparison, notice  again that our analysis does {\it not} proceed via tail estimates (see the comparison with \cite{adam:08} before), and that we also cover the case of  hypotheses  depending on the index of the sample (the ``time'').  

At a more ergodic theoretical level, let us mention the result in \cite{NobDem93}, where it is proved that the uniform convergence of averages holds for $\beta-$mixing samples (with stationary marginals) provided that it holds for i.i.d. samples with the same marginals when the class of functions in consideration has finite Vapnik-Chervonenkis (VC) dimension (as defined here in Example \ref{exasaushe}). Our paper can in part be considered a  continuation of this story towards the investigation of rates of convergence, with more freedom in the independence assumption but with restrictions on the speed of mixing. 

Let us comment briefly on the related research about these rates. Rates of uniform convergence to zero for the centered averages were for instance investigated  in  \cite{YUK86} (see also references therein), where the sample sequence is a $\phi-$mixing (and therefore $\beta-$mixing)  process whose $\phi-$mixing coefficients  satisfy certain growth conditions, and where the class of hypotheses is assumed to satisfy some ``weak metric entropy'' conditions and some controls on the associated maximal variance (see \cite[Conditions (1.1)--(1.4), (1.6), and (1.8)--(1.10)]{YUK86}). Another instance of this story, closer to our paper, is  \cite{Yu94}, which works under a general framework and via techniques that are quite similar to the ones here. It considers a case in which the sample sequence is $\beta-$mixing under   a  decay of the $\beta-$mixing coefficients that can be slower than ours, and it is also an interesting source of additional references. The results in \cite{Yu94} complement our results in so far as \cite{Yu94} considers slower mixing rates, and are complemented by our results in so far as \cite{Yu94} 
 relies on the assumption  of stationary samples and time--independent spaces of hypotheses, which we dispense with here.

Like our own, many of the aforementioned papers proceed via comparisons with the corresponding results for the independent case and   clever bounds on the additional error induced by dependence. The argument for the independent case  typically depends on estimates of probabilities like  \eqref{equgenunidev} when 
\begin{align}
\label{equatodev}
\cG_{1,\dots, n}=\{(g_{1},\dots,g_{n})\}
\end{align}
 consists of a single point (``atomic estimates'') and the  training sequence $D_{n}$ is independent, from where the uniform estimates (for more general $\cG_{1,\dots,n}$) follow via finitely many applications of the atomic estimates using, for instance,  ``symmetrisation'', ``chaining'', and estimates of covering or bracketing numbers  (``entropy estimates''). See for instance \cite{P90} for an introduction to these ideas.

These  estimates have nonetheless been studied ``directly'' under classical dependence conditions in several works.
The arguments in \cite{YUK86}, for instance, depend on a result (\cite[Lemma 2.1]{YUK86}) which is an extension to the $\phi-$mixing case of Bernstein's inequality. 

But the developments in this directions have continued until recent years. One example is \cite{merl:peli:rio:09} (see also references therein), whose results (\cite[Theorems 1 and 2]{merl:peli:rio:09}) imply that, if each $g_{j}$ in \eqref{equatodev} is bounded and $a=-b=1$ (centered case), and if the $\alpha-$mixing coefficients associated to the sample sequence  decay exponentially (\cite[Condition (1.3)]{merl:peli:rio:09}\footnote{This condition is weaker than \eqref{equdefexpmix} below for $\gamma=1$, but we remind  that the estimates in \cite{merl:peli:rio:09} are not uniform.}), then a   Bernstein--type inequality bound holds (under  \eqref{equatodev}) at the right--hand side of \eqref{equgenunidev}.     A second and final one is \cite{dede:gouz:15}, where it is shown that, in the context of irreducible and aperiodic Markov chains, the assumption of geometric ergodicity is equivalent to the satisfaction of McDiarmid-type inequalities for separately bounded functionals of the observables\footnote{\label{foonotmcdia} If $(X_{k})_{k}$ is the Markov chain in consideration, this amounts  to the satisfaction of estimates of the type
\begin{align}
\Prob{|K(X_{1},\dots, X_{n})-\Esp{K(X_{1},\dots, X_{n})}|>t}\leq C_{1}\exp\left({-C_{2}t^{2}}{ /}\sum_{k=1}^{n}L_{k}^{2}\right)
\end{align}
where $K:\R^{n}\to \R$ is any (Borel-measurable) function such that $x\mapsto K(x_{1},\dots,x_{k-1},x,x_{k+1},\dots,x_{n})$  is  bounded by $L_{k}>0$ when $x_{1},\dots,x_{k-1},x_{k+1},\dots,x_{n}$ is fixed, for every $k\in 1,\dots,n$. Notice in particular that this  covers tail estimates like those in \cite{adam:08} and \cite{KM16}  when the entries of $\cG_{1,\dots,n}$ are uniformly bounded. For potential comparisons of \cite{dede:gouz:15}  with our results see again the comparison with \cite{adam:08} and \cite{KM16} above. 
} (\cite[Theorem 2 and Remark 4]{dede:gouz:15}). One of the conclusions in \cite{dede:gouz:15} is that, for the small set specified in \cite[Definition 1]{dede:gouz:15}, these inequalities hold (also) under the conditional law at every starting point in such set and for the deviations of the expectation with respect to such conditional law. 

For the case of suprema of partial sums, the results explained in Section \ref{secabslifdevine}  are comparable with those in \cite{dede:gouz:15}:  they give analogous consequences  for the probability of large deviations\footnote{As opposed, again, to the tail estimates that follow from \cite{dede:gouz:15}.} which rely only on the rate of decay of the $\beta-$mixing coefficients associated to the underlying sequence and on the corresponding estimates from the independent case. These estimates admit therefore as a special case that in which the training sample comes from a Markov chain as those in \cite{dede:gouz:15}.

\paragraph{Organization of the paper.} The rest of the paper is organized as follows:  we begin Section \ref{secbribetmixind} by introducing some notational conventions that will be used in the forthcoming pages. We explain next, also in Section \ref{secbribetmixind}, how to transport uniform deviation inequalities from the independent to the dependent case estimating the additional error via the $\beta-$mixing coefficients. Section \ref{secapp} presents some applications to problems in nonparametric least--squares regression under dependent training samples, in continuity with some of the independent-case considerations in \cite{bargob}. 

\section{Bridge between $\beta-$mixing and independent sequences}
\label{secbribetmixind}
Our strategy for deriving concentration-of-measure inequalities for dependent sequences is to leverage on decoupling techniques and  deviation inequalities for independent sequences (as those of  \cite{bargob}). These inequalities with dependent sequences will take the form of Lemma \ref{thedisfungen} and Proposition  \ref{prop_corsemabsexp}, which constitute the main result of this section. The derivation is made in several steps.

\subsection{Notation and conventions}
\label{secnot}
The following conventions will be used in this paper:
\begin{itemize}
\item We depart from a probability space $(\Omega,\nch{\cE},\P)$ supporting all the  random variables that will appear in our statements and proofs (the existence of this space can be verified {\it a posteriori}).

\item \nch{We denote by $\N=\{1,2,\dots,\}$  the set of positive integers.}

\item For $k,n\in \N$, we will sometimes denote $k:n:=\{k,\dots,n\}$ ($k:n:=\emptyset$ if $k>n$), and we use the notation $c_{1:n}$ for a sequence ($n-$tuple) of elements $(c_{1},\dots,c_{n})$.

\item More generally, given a subset $J\subset \N$, $c_{J}:=(c_{j})_{j\in J}$ denotes a sequence indexed by $J$, which we will call a {\it $J-$tuple}. {The cardinality of $J$ is denoted by $|J|$. }If $c_{J}=(c_{j})_{j\in J}$ is given and $J'\subset J$,  we will denote {\it the projection of $c_{J}$ onto the $J'$ coordinates} by $c_{J'}$\footnote{Of course, we will be careful to use properly the notation to avoid confusions: in no place we will for instance denote two different tuples as $c_{J}$ and $c_{J'}$, except  if their entries with index in $J\cap J'$ are equal.}. Thus for $c_{J}:=(c_{j})_{j\in J}$, 
\begin{align}
\label{equproaj}
c_{J'}=(c_{j})_{j\in J'}.
\end{align}
\item For a subset $J\subset \N$ and a family of sets  $\{C_{j}\}_{j\in J}$  indexed by $J$, we use the notation 
\begin{align}
\label{equdefpro}
C^\otimes _{J}:=\{c_{J}=(c_{j})_{j\in J} |\, \forall j \in J: c_{j}\in C_{j}\}
\end{align}
for {\it the product of the $C_{j}$'s}\footnote{The same care will be taken to avoid confusion here: we will always use the same character (here ``$C_{\cdot}$'') for the sets involved in the product.}. 

\item  Sometimes\footnote{Especially for function hypotheses, see for instance \eqref{equdefcalgf} below.} we will deal with {\it sets $\mathcal{F}_{J}$ of $J-$tuples} which are not necessarily a product of sets. In {all} of these cases the indexing set (i.e., $J$) of the elements of $\cF_{J}$ will be indicated in the notation. In analogy with \eqref{equproaj}, given such $\cF_{J}$ and $J'\subset J$, $\cF_{J'}$ denotes the projection of $\cF_{J}$ into the $J'$ coordinates
\begin{align}
\label{equdeffjpri}
\cF_{J'}:=\{f_{J'}:f_{J}\in \cF_{J}\},
\end{align}
where each $f_{J'}$ is given by \eqref{equproaj}. Thus for instance, for the set in \eqref{equdefpro} and $J'\subset J$, we have $(C^\otimes_{J})_{_{J'}}=C^\otimes_{J'}.$

\item We reserve the character $S$ for Polish spaces with variations from taking products as in the above,  and we  will usually denote by $Z$ a generic random vector in  $S$ with compatible variations when $S$ is a product space. Thus $Z_{J}$ typically denotes a random element of a product space $S^\otimes_{J}$. This is, $Z_{J}=(Z_{j})_{j\in J}$ with $Z_{j}:\Omega\to S_{j}$ $\cE-$measurable and $S_{j}$ a Polish space.

 \item If $Z$ is a random element of $S$ and $B$ a Borel set of $S$, we use the standard notation 
$ \{Z\in B\}:=\{\omega\in \Omega: Z(\omega)\in B\}$
for the preimage of $B$ (which is a set in $\cE$). We use similarly the standard notation $\P_{Z}$ for the law of $Z$: given a Borel set $B\subset S$, 
\begin{align}
\P_{Z}(B):=\P(\{Z\in B\}).
\end{align}

\item For a Polish space $S$, $\mathcal{L}_{S}$ denotes the space of Borel-measurable functions $S\to \R$. If $\{S_{j}\}_{j\in J}$  ($J\subset \N$) are Polish spaces, set $\mathcal{L}^{\otimes_{J}}_{S}:=\Pi_{j\in J}\cL_{S_{j}}$.  A subset of $\mathcal{L}^{\otimes_{J}}_{S}$ will be  called a {\it sequential family of functions compatible with $S_{J}^{\otimes}$},  or simply a {\it sequential family of functions} when there is no ambiguity for $\{S_{j}\}_{j\in J}$. \nch{One relevant example is the sequential family of functions \begin{align}
\label{equdefcalgf}
\cG_{\cF,1:n}:=\{g_{f,1:n}:f\in \cF\}
\end{align} 
in \eqref{equdefgfk}.}

\item When needed, we will operate with sequential families of functios in a componentwise manner, thus given $f_{J}=(f_{j})_{j},f_{J}'=(f_{j}')_{j}$ in $\cF_{J}$, where $\cF_{J}$ is a sequential family of functions, $f_{J}+f_{J'}=(f_{j}+f_{j}')_{j}$, $f_{J}f_{J}':=(f_{j}f_{j}')_{j}$, $\nch{|f_{J}|}:=(|f_{j}|)_{j}$, and so on.
\item A couple $(Z_{J},\cF_{J})$ where $Z_{J}$ is a random element of $S^\otimes_{J}$ and $\cF_{J}$ is a sequential family of functions compatible with $S^{\otimes}_{J}$ is called a {\it composable pair}. {In this definition, the reference to $S^{\otimes}_{J}$ is implicit and omitted for the sake of convenience.} Notice that if $(Z_{J},\cF_{J})$ is a composable pair and $J'\subset J$, then $(Z_{J'},\cF_{J'})$ is a composable pair.

\item   The {\it empirical mean} and the {\it average mean} associated to the {composable pair $(Z_J,f_{J})$},    denoted respectively by $\emzJ f_{J}$ and $\mmzJ f_{J}$ are defined, {for nonempty finite $J$}, as 
  \begin{align}
\label{equdefavemea}
\emzJ f_{J}:=\frac{1}{|J|}\sum_{j\in J} f_{j}(Z_{j}), & \qquad \mmzJ f_{J} 
=\frac{1}{|J|}\sum_{j\in J}\int_{S_{j}} f_{j}(z)\,\PZj(\dz). 
 \end{align}
(the second average is defined only for those $f_{J}$ where it makes sense, including the possible value $\infty$).  With this convention, we will use the short notation 
$$(a\emzJ+b\mmzJ) f_{J}:=a\emzJ f_J+b\mmzJ f_{J}= \frac{a}{|J|}\sum_{j\in J} f_{j}(Z_{j})+ \frac{b}{|J|}\sum_{j\in J}\int_{S_{j}} f_{j}(z)\,\PZj(\dz),$$ for any real constants $a,b$.
\item When convenient, we identify a function $f$ with the {\it constant} sequence of functions \nch{$(f_{j})_{j\in J}$ where $f_{j}=f$ for all $j\in J$}, which together with the above permits, for instance,  an unambiguous interpretation of the object ``$\mmzJ f$''.

\end{itemize}

\subsection{``Union bound'' for deviations of averages 
}
\sectionmark{Union bound for deviations}
We begin with the following elementary lemma, which shows that estimates on the distribution function associated to suprema of (generally non--centered) empirical means can be obtained from corresponding estimates on the empirical means over the indexes in a partition of the set  $\{1,\dots,n\}$.

\begin{lemma}[``Union bound'' for deviation of averages]
\label{thedisfungen}
Let \nch{$n\in \N$,}  let $\cJ$ be a partition (by nonempty subsets) of $\{1,\dots,n\}$ and let $(Z_{1:n},\cG_{1:n})$ be a composable pair.  
Then for every $(a,b,t)\in \R^{3}$
\begin{align}
\Prob{\sup_{{g_{1:n}}\in\cG_{1:n}}(a \emzon+b\mmzon) g_{1:n} \geq t}
\leq \sum_{J\in \cJ}\Prob{\sup_{{g_{J}}\in\cG_{J}}(a \emzJ+b\mmzJ) g_{J}\geq t}.
\label{equbousumpar}
\end{align} 
\end{lemma}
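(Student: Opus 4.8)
The key observation is that the supremum of an average over the full index set $\{1,\dots,n\}$ is controlled by the \emph{maximum} over the blocks of the partition of the supremum of the average restricted to each block. Concretely, for any fixed $g_{1:n}\in\cG_{1:n}$, writing $n=\sum_{J\in\cJ}|J|$ and splitting the sum $\sum_{j=1}^n$ according to the partition $\cJ$, one has the convex-combination identity
\begin{align}
(a\emzon+b\mmzon)g_{1:n}=\sum_{J\in\cJ}\frac{|J|}{n}\,(a\emzJ+b\mmzJ)g_{J}.
\end{align}
Since the weights $|J|/n$ are nonnegative and sum to $1$, a weighted average of the numbers $(a\emzJ+b\mmzJ)g_{J}$ cannot exceed the largest of them; hence $(a\emzon+b\mmzon)g_{1:n}\le\max_{J\in\cJ}(a\emzJ+b\mmzJ)g_{J}$. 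Taking the supremum over $g_{1:n}\in\cG_{1:n}$ on the left and bounding, for each block $J$, the term $(a\emzJ+b\mmzJ)g_{J}$ by $\sup_{g_J\in\cG_J}(a\emzJ+b\mmzJ)g_J$ (using that $g_J\in\cG_J$ whenever $g_{1:n}\in\cG_{1:n}$, by the projection convention \eqref{equdeffjpri}), I get the pointwise (in $\omega$) inequality
\begin{align}
\sup_{g_{1:n}\in\cG_{1:n}}(a\emzon+b\mmzon)g_{1:n}\le\max_{J\in\cJ}\ \sup_{g_J\in\cG_J}(a\emzJ+b\mmzJ)g_J.
\end{align}
One small point to handle carefully: the supremum over $g_{1:n}$ of a maximum over $J$ of block-quantities is at most the maximum over $J$ of the block-suprema, because the supremum of the $J$-block quantity is computed over $\cG_J$, which contains the $J$-projection of every element of $\cG_{1:n}$; the maximum over the finite set $\cJ$ may then be pulled outside the supremum freely.

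From this deterministic inequality the probabilistic claim is immediate: the event $\{\sup_{g_{1:n}}(a\emzon+b\mmzon)g_{1:n}\ge t\}$ is contained in the event $\{\max_{J\in\cJ}\sup_{g_J\in\cG_J}(a\emzJ+b\mmzJ)g_J\ge t\}$, which in turn is the finite union $\bigcup_{J\in\cJ}\{\sup_{g_J\in\cG_J}(a\emzJ+b\mmzJ)g_J\ge t\}$. Applying monotonicity of $\P$ and the union bound gives exactly \eqref{equbousumpar}.

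This is genuinely elementary and I do not expect a real obstacle; the only things to be pedantic about are (i) measurability of the suprema appearing in the displayed events — but these are taken as given, since the statement already presumes the probabilities on both sides are well defined — and (ii) the proviso that $\mmzJ g_J$ and $\mmzon g_{1:n}$ are defined for the $g$'s in question, so that the convex-combination identity makes sense; when $b=0$ this is vacuous, and otherwise one restricts to the $g$'s for which the average means exist, consistently with the convention stated after \eqref{equdefavemea}. The case $t$ arbitrary in $\R$ (including $t\le 0$) needs no special treatment since the argument is purely order-theoretic.
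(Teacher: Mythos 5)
Your proof is correct and follows essentially the same route as the paper's: the convex-combination decomposition of $(a\emzon+b\mmzon)g_{1:n}$ over the blocks of $\cJ$ with weights $|J|/n$, the projection property $g_J\in\cG_J$, and the union bound. The only cosmetic difference is that you bound the weighted average by the maximum of the block suprema before passing to events, whereas the paper first uses subadditivity of the supremum and then observes that a weighted average exceeding $t$ forces some block term to exceed $t$; these are the same elementary fact.
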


\begin{proof}
The proof is easy: for every $J\in \cJ$, denote  $\gamma_{J}:=|J|/n$. Notice  that $\sum_{J\in\cJ}\gamma_{J}=1$.  With this,  \eqref{equbousumpar} is an immediate consequence of the subadditivity of the supremum, linearity, and the union bound:
\begin{align}
\Big\{\sup_{{ g_{1:n}}\in\cG_{1:n}}(a \emzon+b\mmzon) g_{1:n} \geq t\Big\}
=\Big\{\sup_{{g_{1:n}}\in\cG_{1:n}} \sum_{J\in \cJ}\gamma_{J}(a\emzJ+b\mmzJ) g_{J} \geq \sum_{J\in \cJ}\gamma_{J} t\Big\}\\
\subset \Big\{\sum_{J\in \cJ} \gamma_{J}\sup_{{ g_{J}}\in\cG_{J}}(a\emzJ+b\mmzJ) g_{J}\geq \sum_{J\in \cJ}\gamma_{J} t\Big\}
\subset \bigcup_{J\in\cJ}\Big\{\sup_{{ g_{J}}\in\cG_{J}} (a\emzJ+b\mmzJ) g_{J}\geq  t\Big\}.
\end{align}
\end{proof}

{The above lemma shows that if we can find appropriate subsampling partition $\cJ$ for which we have an exponential (for instance) inequality for the deviation probability, the same type of inequality holds for the full sample   $\{1,\dots,n\}$. The construction of the partition $\cJ$ will be made using the $\beta-$mixing properties of the sequence $Z_{1:n}$, which is now discussed.}

\nch{\begin{remark}[Generalization under a convex-like estimate]
\label{remgenestfamfun}
Lemma \ref{thedisfungen} can clearly be extended to any family of (Borel-measurable) functionals $\{K_{J}\}_{J\subset \N}$, $K_{J}:S_{J}^{\otimes}\to \R$ with the property that for every disjoint family $\{J_{1},\dots ,J_{r}\}\subset 2^{\N}$ and some nonnegative $\gamma_{1},\dots,\gamma_{r}$ with $\sum_{k}\gamma_{k}=1$, $K_{J}(Z_{J})\leq \sum_{k}\gamma_{k}K_{J_{k}}(Z_{J_{k}})$, $\P-$a.s., where $J:=\cup_{k} J_{k}$. For such a family one has the inequality
\begin{align}
\Prob{K_{J}(Z_{J})\geq t}\leq \sum_{k=1}^{r}\Prob{K_{J_{k}}(Z_{J_{k}})\geq t},
\end{align}
for every $t\in \R$, every $J\subset \N$, and every partition $J_{1},\dots, J_{r}$ of $J$. See also Remark \ref{remvergenfamfunlif} below.
\end{remark}}

\subsection{The $\beta-$mixing coefficients}

In this section, we introduce some  facts about $\beta-$mixing coefficients that will be useful later. {For an account on mixing properties, we refer the reader to \cite{DDLLLP07, douk:12, douc:moul:prio:soul:17}.}
\subsubsection{Basic definitions and properties}

\begin{dfn}[$\beta-$mixing coefficients]
\label{defbetmixcoe}
Let $\cE_{1}$ and $\cE_{2}$ be two sub-sigma algebras  of $\cE$. The $\beta-$mixing coefficient $\beta(\cE_{1},\cE_{2})$ between $\cE_{1}$ and $\cE_{2}$ is defined as
\nch{ \begin{equation}
\label{betmixdef}
\beta(\cE_{1},\cE_{2}):=\Esp{\esssup_{E_{1}\in\cE_{1}}|\P({E}_{1})-\P[E_{1}|\cE_{2}]|}.
\end{equation}
}
\end{dfn}
\nch{For a definition of the essential supremum, ``$\esssup$'', of a family of random variables, see \cite[Proposition VI-1-1]{NEV75}. It follows in particular that there exists a countable family $\{E_{1,n}\}_{n}\subset \cE_{1}$ such that
\begin{align}
\label{equdefbetcousup}
\beta(\cE_{1},\cE_{2})=\Esp{\sup_{n}|\P(E_{1,n})-\P[E_{1,n}|\cE_{2}]|}.
\end{align}}
\begin{remark}[A characterization. Properties.]
\label{remprobetmix} 
\nch{
If $\{E_{1,n}\}_{n}$ is the family in \eqref{equdefbetcousup} and $\cE_{2}$ is countably generated, then
\begin{align}
\beta(\cE_{1},\cE_{2})&=\Esp{\sup_{n}|\P E_{1,n} -\Prob{ E_{1,n}|\cE_{2}}|}\\
&=\frac{1}{2}\sup_{(P_{{1}},P_{{2}})\in \cP_{\cE_{1}}\times \cP_{\cE_{2}}}\sum_{(E_{1}',E_{2}')\in{P_{{1}}}\times {P_{{2}}}}|\Prob{E_{1}'}\Prob{E_{2}'}-\Prob{E_{1}'\cap E_{2}'}|,
\label{equcharbetmixcoefuse}
\end{align} 
where $\cP_{\cE_{k}}$ ($k=1,2$) denotes the family of finite partitions of $\Omega$ by $\cE_{k}-$sets\footnote{This can be seen for instance by noticing that there exist increasing families of finite fields $\{\cE_{j,k}\}_{k}$ ($j=1,2$) with $\cup_{k}\cE_{j,k}\subset \cE_{j}$  such that 
\begin{align}
\beta(\cE_{1},\cE_{2})=\lim_{k}\lim_{l}\beta(\cE_{1,l},\cE_{2,k}),
\end{align}
and using elementary considerations on $\beta(\cE_{1},\cE_{2})$ when $\cE_{j}$ are finite fields. For a proof under slightly more restrictive hypotheses, see \cite[Proposition F.2.8]{doumarcha}.}. This representation holds in particular if $\cE_{k}:=\sigma(Z_{k})$ is the sigma algebra generated by $Z_{k}$, where $Z_{k}$ ($k=1,2$) is a random element of a Polish space $S_{k}$.}
 
  Additionally, it follows  that 
   \begin{enumerate}[(i)]
   \item \label{itemi}{\it The $\beta-$mixing coefficients are symmetric: $\beta(\cE_{1},\cE_{2})=\beta(\cE_{2},\cE_{1})$}. 
   \item \label{itemiii}{\it $\beta(\cdot,\cdot)$ is increasing in each component: if $\cE_{k}'\subset \cE_{k}$ ($k=1,2$)    then}
\begin{equation}
\label{betmixareinc}
\beta(\cE_{1}',\cE_{2}')\leq \beta(\cE_{1},\cE_{2}).
\end{equation}
   \item \label{itemii}
   {\it $\beta(\cE_{1},\cE_{2})=0$ if and only in $\cE_{1}$ and $\cE_{2}$ are $\P-$independent}.      
\end{enumerate}
The first two properties  follow by the equality between the extreme sides of \eqref{equcharbetmixcoefuse}. The third one is clear even from the general definition \eqref{betmixdef}. 
\end{remark}

\subsubsection{$\beta-$coefficients of $m-$dependence}
\nch{We now extend the previous considerations to a case involving families of sub-sigma algebras related to a sequence of random variables $Z_{1:\infty}$. The aim is to set a precise discussion involving some $\beta-$mixing coefficients associated to ``the  present'' and ``the past'' of this sequence.
}
\begin{dfn}[ $\beta-$coefficients of $m-$dependence]
\label{defbetmixpaspre}
 Given a  subset $J\subset \N$, a random element $Z_{J}$ of $S^\otimes_{J}$, and $(m,l)\in \N\times \N$,  the $l-$th $\beta-$coefficient of $m-$dependence  of 
$Z_{J}$ is defined as
\begin{align}
\label{betdepcoeequ}
\beta_{Z_J}(m,l):=\beta(\sigma(Z_{J\cap[1,l-m]}),\sigma(Z_{J\cap \{l\}})),
\end{align}
where the right--hand side is defined in \eqref{betmixdef} and with the convention $Z_{\emptyset}:=\emptyset$. The  maximal $\beta-$coefficient of $m-$dependence is denoted by $\beta_{Z_{J}}(m)$:
\begin{align}
\label{equsupbetdepcoe}
\beta_{Z_{J}}(m):=\sup_{l\in \N}\beta_{Z_{J}}(m,l).
\end{align} 
\end{dfn}
Thus for $l\in J$, $\beta_{Z_{J}}(m,l)$ gives the $\beta-$mixing coefficient between $Z_{l}$ and the ``distant past'' (at least $m$ units before $l$) of $Z_{J}$. Similarly,  $\beta_{Z_{J}}(m)$ is the \nch{smallest upper bound of the} $\beta-$mixing coefficients of $Z_{J}$ within   ``some present'' and its (at least $m$ units) ``distant past''.

We list, for future reference, some  properties  of $\beta_{Z_{\cdot}}(\cdot,\cdot)$ and $\beta_{Z_{\cdot}}(\cdot)$.
\begin{properties}[of $\beta_{Z_{J}}$]
\label{prop:remrbetzjinwor}~
\begin{enumerate}
\item \label{betzerknotinJ} If $l\notin J$ then $\beta_{Z_{J}}(m,l)=0$ for all $m$, see \nch{\eqref{itemii} in} Remark \ref{remprobetmix}.
\item \label{betzjaredec} For fixed $l$, $\beta_{Z_{J}}(\cdot, l)$ is decreasing. Thus \nch{$(\beta_{Z_{J}}(m))_{m}$} is also decreasing.
\item \label{itemdepsuf} A sufficient condition for ${\beta_{Z_{J}}}(m)=0$ is the {\it $m-$dependence of  $Z_{J}$}, i.e., the  hypothesis that for every $l$, $Z_{J\cap[1,l]}$ and $Z_{J\cap[l+m,\infty)}$  are independent (this condition is not necessary\footnote{\label{foonotbetnotbetmix} Choose random variables $X,Y,Z$ with $X$ independent of $Y$ and  $X$ independent of $Z$ but with $Y+Z$ not independent of $X$, choose $X'$ independent of $\sigma(X,Y,Z)$ and consider, for $n=4$, $J=\{1,2,3,4\}$, and $m=2$, the choices $Z_{1}=X$, $Z_{2}=X'$, $Z_{3}=Y$, $Z_{4}=Z$).}). In particular, $\beta_{Z_{J}}(\cdot)\equiv 0$ if the entries  of $Z_{J}$ are independent.
\item If $J'\subset J$, then ${\beta_{Z_{J'}}}(\cdot,\cdot)\leq {\beta_{Z_{J}}}(\cdot,\cdot)$
\nch{(pointwise)} by \eqref{betmixareinc}. If in particular $Z_{1:n}$ is a random element of $S^\otimes_{1:n}$ and $J\subset \{1,\dots,n\}$ is any subset then
\begin{align}
\label{equcomzjj1ton}
{\beta_{Z_{J}}}(\cdot,\cdot)\leq {\beta_{Z_{1:n}}}(\cdot,\cdot).
\end{align}

\item Assume that the partition $\cJ$ \nch{of $\{1,\dots,n\}$} is such that the indexes within each $J\in \cJ$ are separated by a ``minimal gap'', say $1\leq m< n$,\footnote{I.e., $m$ is the smallest $m'$ such that, for any $J\in \cJ$ and any different $j_{1},j_{2}\in J$, $|j_{1}-j_{2}|\geq m'$.} then the inequality
\begin{align}
\label{equbetmdis}
\beta_{Z_{J}}(m')\leq 
\beta_{Z_{J}}(1)\leq  \beta_{Z_{1:n}}(m)
\end{align}
holds for all $m'\in \N$ and all $J\in \cJ$. The first inequality follows from Property \ref{betzjaredec}, the second follows from  Property \ref{betzerknotinJ}, 
the inequality \eqref{equcomzjj1ton}, and the fact that for all $J\in \cJ$ and all $l\in J$, $J\cap[1,l-1] \subset \{1,\dots, l-m\}$. 
 
\end{enumerate} 
\end{properties}

\subsubsection{Examples}
 Of particular interest for us are the following mixing hypotheses on the rate of decay of $\beta_{Z_{\cdot}}(\cdot)$.
\begin{dfn}[Sub exponentially $\beta-$mixing process]
\label{defsubbetmix}
Let  $Z_{1:\infty}$ be a random element of $S^{\otimes}_{1:\infty}$. $Z_{1:\infty}$ is {\it subexponentially $\beta-$mixing} with parameters $(a,b,\gamma)\in (0,\infty)\times(0,\infty)\times(0,\infty)$ if for all $m\in \N$
\begin{align}
\label{equdefexpmix}
\beta_{Z_{1:\infty}}(m)\leq a\exp(-bm^{\gamma}).
\end{align}
\end{dfn}
\begin{dfn}[Subpolinomially $\beta-$mixing processes]
\label{defsubpolmix}
Let  $Z_{1:\infty}$ be a random element of $S^{\otimes}_{1:\infty}$. $Z_{1:\infty}$ is {\it subpolinomially $\beta-$mixing} with parameters $(a,\gamma)\in (0,\infty)\times(1,\infty)$ if for all $m\in \N$
\begin{align}
\label{equdefpolmix}
\beta_{Z_{1:\infty}}(m)\leq am^{-\gamma}.
\end{align}
\end{dfn} 
 If $Z_{1:\infty}$  is a Markov Chain with state space $S$, then, by the Markov property,
\begin{align}
\beta_{Z_{1:\infty}}(m)=\sup_{n}\beta(\sigma(Z_{n}),\sigma(Z_{n+m})).
\end{align} 
Sufficient conditions for exponentially mixing rates (i.e. \eqref{equdefexpmix} holds with $\gamma\geq 1$) of Markov chains can be consulted also in \cite{brad:05}, \nch{\cite{fort:moul:03}, \cite[Chapter 16]{meyn:twee:09}}. For sufficient conditions implying (in the Markovian setting) subexponential $\beta-$mixing rates  \eqref{equdefexpmix} with $\gamma\in (0,1)$ or polynomial rates like \eqref{equdefpolmix}, see for instance \cite{tuotwe}, \cite{jarrob}, \nch{\cite{fort:moul:03b}, \cite{douc:fort:moul:soul:04},} \cite{douforgui},  and the references therein.

\subsection {Berbee's Lemma}
The $\beta-$mixing coefficients measure, on a certain sense, the ``($\P-$)distance from independence'' between two sigma--algebras. This notion is put forward in a more concrete way by the following classical coupling result\footnote{We omit specifications about the ``richness'' of $(\Omega,\cA)$, which are implicitly embedded in the introductory remarks.} (see \cite[Corollary 4.2.5]{ber}, \cite[Theorem 1, p.7]{douk:12}).
\begin{lemma}[Berbee's Lemma]
\label{berlem}
Let {$(V,W)$} be a random vector in $S_{1}\times S_{2}$. There exists a {$S_{2}-$}valued random vector {$W^{*}$}, distributed as {$W$},  independent of $V$, and with the property 
\begin{equation}
\label{berlemequ}
\beta(\sigma(V),\sigma({W}))=\P({W\neq W^{*}}).
\end{equation}
\end{lemma}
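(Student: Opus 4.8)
The plan is to realize the coupling by an explicit construction on an enlarged probability space, using a regular conditional distribution of $W$ given $V$ together with the decomposition of the joint law of $(V,W)$ relative to the product of the marginals. Since $S_1$ and $S_2$ are Polish, there exists a regular conditional probability $\nu(v,\cdot)$ on $S_2$ such that $\nu(V,\cdot)$ is a version of $\P[W\in\cdot\,|\,V]$, and the law of $W$, call it $\P_W$, serves as a ``reference'' measure. The first step is to introduce, for each fixed $v\in S_1$, the Lebesgue decomposition-type quantity $d(v):=\|\nu(v,\cdot)-\P_W\|_{TV}$ (total variation), or rather the ``overlap'' $1-d(v)$ realized by the common part $\nu(v,\cdot)\wedge \P_W$, and to check that the map $v\mapsto d(v)$ is measurable and that $\Esp{d(V)}=\beta(\sigma(V),\sigma(W))$; this last identity is exactly the characterization \eqref{equcharbetmixcoefuse} read off at the level of the pair $(V,W)$, using that $\sigma(W)$ may be taken countably generated (as $S_2$ is Polish) so that $\beta(\sigma(V),\sigma(W))=\Esp{\|\P[W\in\cdot\,|\,V]-\P_W\|_{TV}}$.

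The second step is the actual construction. On an enlarged space carrying an independent uniform random variable $U$ on $[0,1]$ (and, if needed, auxiliary randomness), I would set $W^{*}:=W$ on the event that a Bernoulli$(1-d(V))$ trial driven by $U$ succeeds \emph{and} $W$ happens to fall in the common part, and otherwise resample $W^{*}$ from the ``residual'' probability measure $(\P_W-\nu(V,\cdot)\wedge\P_W)/d(V)$, independently of $V$. The bookkeeping to verify is: (i) $W^{*}$ has law $\P_W$ — this follows because $\nu(v,\cdot)\wedge\P_W$ plus the residual piece reassembles $\P_W$ exactly, after integrating out $v$; (ii) $W^{*}$ is independent of $V$ — the resampled part is independent of $V$ by construction, and the ``kept'' part, after the thinning by the common measure, also contributes independently of $V$ because the common part $\nu(v,\cdot)\wedge\P_W$ is handled in a way whose $v$-dependence cancels against the Bernoulli weight $1-d(v)$; (iii) $\P(W\neq W^{*})=\Esp{d(V)}=\beta(\sigma(V),\sigma(W))$, which combines the construction with the identity from the first step. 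Alternatively, rather than reconstructing the argument, I would simply invoke \cite[Corollary 4.2.5]{ber} or \cite[Theorem 1, p.7]{douk:12} as cited, since the statement is classical; the construction above is the content of those references.

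\textbf{Main obstacle.} The delicate point is (ii), the independence of $W^{*}$ from $V$, together with making sure the ``kept'' mass is exactly $1-d(v)$ pointwise in $v$ so that no $v$-dependence leaks through. The clean way to organize this is to first reduce to the case where $(V,W)$ takes values in a standard atomless or at worst countably-generated setting (again via Polishness) and then to phrase the construction measure-theoretically: define the measure $\lambda$ on $S_1\times S_2$ by $\lambda:=(\P_V\otimes\P_W)\wedge \P_{(V,W)}$ (the infimum of the product of marginals and the joint law, taken as measures), observe $\lambda(S_1\times S_2)=1-\beta(\sigma(V),\sigma(W))$, and build $(V,W^{*})$ so that with probability $1-\beta$ the pair is drawn from $\lambda$ (normalized) — which automatically gives $W^{*}=W$ on that event since $\lambda\leq\P_{(V,W)}$ — and with probability $\beta$ the pair $(V,W^{*})$ is drawn so that its $V$-marginal completes $\P_V$ and its $W^{*}$-marginal completes $\P_W$, \emph{independently}, from the residual measures $\P_V-\lambda(\cdot\times S_2)$ and $\P_W-\lambda(S_1\times\cdot)$ rescaled by $1/\beta$. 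Independence of $W^{*}$ from $V$ then holds because on the ``residual'' event the two coordinates are drawn independently by construction, and on the ``$\lambda$'' event one checks directly that the conditional law of $W^{*}$ given $V$ equals $\P_W$ — this is precisely where the definition $\lambda=(\P_V\otimes\P_W)\wedge\P_{(V,W)}$ pays off, since $\lambda(A\times B)\le \P_V(A)\P_W(B)$ forces the right normalization. The routine but slightly tedious verification that $W\neq W^{*}$ occurs exactly off the ``$\lambda$'' event, hence with probability $\beta$, then closes the proof.
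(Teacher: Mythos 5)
The paper itself offers no proof of this lemma: it imports it verbatim from the cited sources (\cite[Corollary 4.2.5]{ber}, \cite[Theorem 1, p.7]{douk:12}), so your fallback of simply invoking those references is exactly what the paper does. Your first construction is also the standard route and is sound in outline: take a regular conditional probability $\nu(v,\cdot)$ for $W$ given $V$ (Polishness), use $\Esp{\|\nu(V,\cdot)-\P_W\|_{TV}}=\beta(\sigma(V),\sigma(W))$, and perform the maximal coupling of $\nu(v,\cdot)$ with $\P_W$ \emph{fiberwise on each} $\{V=v\}$; then the conditional law of $W^{*}$ given $V=v$ is exactly $\P_W$ for every $v$ (hence independence and the correct marginal), and $\P(W\neq W^{*}\mid V=v)=\|\nu(v,\cdot)-\P_W\|_{TV}$ because the two fiberwise residuals are mutually singular. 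One imprecision: the acceptance probability for keeping $W^{*}=W$ must be the Radon--Nikodym derivative $\frac{\dd(\nu(v,\cdot)\wedge\P_W)}{\dd\nu(v,\cdot)}(W)$, not an independent Bernoulli$(1-d(v))$ intersected with ``$W$ falls in the common part'' (the common part is a measure, not a set); this is repairable.

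The genuine gap is in the second construction, which you designate as the clean resolution of the one delicate point (independence of $W^{*}$ from $V$). Setting $\lambda:=(\P_V\otimes\P_W)\wedge\P_{(V,W)}$, keeping $W^{*}=W$ on the $\lambda$-event, and otherwise drawing $W^{*}$ independently from the rescaled residual marginal does \emph{not} make $W^{*}$ independent of $V$. Counterexample: $V=W$ uniform on $\{0,1\}$. Then $\lambda$ puts mass $1/4$ at $(0,0)$ and $(1,1)$, $\beta=1/2$, both residual marginals equal $(1/4,1/4)$, and the resulting law of $(V,W^{*})$ is $\lambda+\tfrac1\beta(\P_V-\lambda_1)\otimes(\P_W-\lambda_2)$ (with $\lambda_1:=\lambda(\cdot\times S_2)$, $\lambda_2:=\lambda(S_1\times\cdot)$), i.e.\ mass $3/8$ on the diagonal points and $1/8$ off-diagonal --- not $\P_V\otimes\P_W$. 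Your claimed check that ``on the $\lambda$-event the conditional law of $W^{*}$ given $V$ equals $\P_W$'' fails here (given $V=0$ it is $\delta_0$), and so does the claim that $W\neq W^{*}$ exactly off the $\lambda$-event, since the independently resampled $W^{*}$ can coincide with $W$ by chance. The underlying obstruction is that $\P_V\otimes\P_W-\lambda$ is generally not a product measure, so its marginals cannot be resampled independently; and the genuine global maximal coupling of $\P_{(V,W)}$ with $\P_V\otimes\P_W$ would alter the first coordinate, which Berbee's lemma forbids. The fiberwise construction is not an optional reformulation --- it is the proof.
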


The above lemma admits the following (apparently) generalised version\footnote{Whose proof, although developed independently, follows an argument resembling the one in \cite[p.484]{vien:97}.}:

\begin{lemma}[Generalised Berbee's Lemma]
\label{genberlem}
Given $N\in \N$ and a random sequence ${V}_{1:N}$ of $S^\otimes_{1:N}$, there exists a random sequence ${V}_{1:N}^{*}$ with independent entries  such that for every $1\leq k\leq N$,
\begin{enumerate}
\item ${V}_{k}$ and ${V}_{k}^{*}$ have the same distribution, and
\item 
\begin{align}
\label{equberlemranvardif}
\P({V}_{k}\neq {V}_{k}^{*})=\beta(\sigma({V}_{1:k-1}),\sigma({V}_{k})).
\end{align}
\end{enumerate}
(In particular, ${V}_{1}={V}_{1}^{*}$, $\P-$a.s.)
\end{lemma}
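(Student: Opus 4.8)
The plan is to prove the Generalised Berbee's Lemma by induction on $N$, applying the one-step Berbee Lemma \ref{berlem} repeatedly, each time decoupling the ``new'' coordinate $V_{k}$ from the (already partly decoupled) block of previous coordinates $V_{1:k-1}$. The subtlety — and this is the main obstacle — is that Berbee's Lemma, applied to the pair $(V_{1:k-1}, V_{k})$, produces a copy $V_{k}^{*}$ that is independent of the \emph{whole vector} $V_{1:k-1}$, but we need $V_{k}^{*}$ to be independent of the previously constructed \emph{decoupled} sequence $V_{1:k-1}^{*}$, whose entries are jointly independent and each $\P$-a.s. equal to the corresponding $V_{j}$ with high probability. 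So the induction has to be set up carefully so that the joint law of $(V_{1:k-1}, V_{1:k-1}^{*})$ is controlled, and at each step the new variable is coupled to the original $V_{k}$ (to preserve its law and the closeness estimate \eqref{equberlemranvardif}) while being made independent of the already-built block.

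\textbf{Setup of the induction.} First I would record the base case: for $N=1$ take $V_{1}^{*}:=V_{1}$; then $\beta(\sigma(V_{\emptyset}),\sigma(V_{1}))=\beta(\{\emptyset,\Omega\},\sigma(V_{1}))=0=\P(V_{1}\neq V_{1}^{*})$ by \eqref{itemii} in Remark \ref{remprobetmix}, and a single entry is vacuously independent. For the inductive step, suppose $V_{1:N-1}^{*}$ has been constructed on a (sufficiently rich) probability space, with independent entries, $V_{k}\overset{d}{=}V_{k}^{*}$ and $\P(V_{k}\neq V_{k}^{*})=\beta(\sigma(V_{1:k-1}),\sigma(V_{k}))$ for $1\le k\le N-1$, \emph{and} — this is the key part of the inductive hypothesis — the joint vector $(V_{1:N-1},V_{1:N-1}^{*})$ is realized on the same space as $V_{1:N}$, with $V_{1:N-1}$ being the honest first $N-1$ coordinates. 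Now apply Berbee's Lemma \ref{berlem} to the pair $\big(\,V_{1:N-1}\,,\,V_{N}\,\big)$ (viewing $V_{1:N-1}$ as an $S^{\otimes}_{1:N-1}$-valued random element, i.e. $V = V_{1:N-1}$, $W = V_{N}$): on a possibly enlarged space there is $V_{N}^{*}\overset{d}{=}V_{N}$, independent of $V_{1:N-1}$, with $\P(V_{N}\neq V_{N}^{*})=\beta(\sigma(V_{1:N-1}),\sigma(V_{N}))$, which is exactly the right-hand side of \eqref{equberlemranvardif} for $k=N$.

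\textbf{Propagating independence.} The remaining point is to check that the new $V_{N}^{*}$ is independent not merely of $V_{1:N-1}$ but of $V_{1:N-1}^{*}$, so that the full tuple $V_{1:N}^{*}$ has independent entries. This follows because $V_{1:N-1}^{*}$ was constructed as a function of $V_{1:N-1}$ together with auxiliary randomization variables that one arranges (using the richness of the space, exactly the ``richness'' caveat flagged in the footnote to Lemma \ref{berlem}) to be independent of everything introduced at stage $N$; more cleanly, one invokes Berbee's Lemma in the form that produces $V_{N}^{*}$ independent of the \emph{entire} sigma-algebra generated by $(V_{1:N-1},V_{1:N-1}^{*})$ — which is legitimate since that sigma-algebra is countably generated (Polish coordinates) and can itself be encoded as a single Polish-valued random element $W' := (V_{1:N-1}, V_{1:N-1}^{*})$ to which Berbee applies with $V \mapsto W'$, $W \mapsto V_{N}$; note $\beta(\sigma(W'),\sigma(V_{N})) \ge \beta(\sigma(V_{1:N-1}),\sigma(V_{N}))$ in general, so to keep the \emph{sharp} constant one instead argues that conditionally on $V_{1:N-1}$ the copy $V_{N}^{*}$ can be drawn from the (regular conditional) law of $V_{N}$ coupled optimally to $V_{N}$, using fresh randomness independent of $V_{1:N-1}^{*}$, which realizes $\P(V_N\ne V_N^{*})=\beta(\sigma(V_{1:N-1}),\sigma(V_N))$ while leaving $V_N^{*}$ independent of $V_{1:N-1}^{*}$ and of $V_{1:N-1}$. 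Combining: $V_{1:N}^{*}$ has independent entries, each $V_{k}^{*}\overset{d}{=}V_{k}$, and \eqref{equberlemranvardif} holds for all $k\le N$; the assertion $V_{1}=V_{1}^{*}$ a.s.\ is the base case. This closes the induction, and the obstacle — reconciling ``independent of the whole past vector'' with ``jointly independent with the already-decoupled past'' — is precisely handled by doing the optimal coupling of $V_{N}^{*}$ to $V_{N}$ conditionally on $V_{1:N-1}$ using auxiliary randomness, rather than reapplying Berbee to the enlarged vector.
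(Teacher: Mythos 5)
Your proposal is essentially correct but follows a genuinely different route from the paper. You induct \emph{forward}, constructing $V_{1}^{*},\dots,V_{N-1}^{*}$ first and then coupling $V_{N}^{*}$ to $V_{N}$ by opening up the proof of Berbee's lemma: a conditional maximal coupling given $V_{1:N-1}$, driven by fresh auxiliary randomness. The paper instead inducts \emph{backward}: having built $V_{k+1:N}^{*}$ independent of $V_{1:k}$, it applies Berbee's lemma as a black box to the pair $V:=(V_{1:k-1},V_{k+1:N}^{*})$, $W:=V_{k}$, so that the new $V_{k}^{*}$ is \emph{automatically} independent of the already-constructed block, and the sharp constant is preserved by the identity $\beta(\sigma(V_{1},V),\sigma(V_{2}))=\beta(\sigma(V_{1}),\sigma(V_{2}))$ whenever $V$ is independent of $\sigma(V_{1},V_{2})$ (the paper's preliminary observation \eqref{equequbetmixind}). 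The backward scheme thus dissolves exactly the obstacle you correctly identify — reconciling ``independent of the original past'' with ``independent of the decoupled companions'' — without any bookkeeping of randomization variables; your scheme buys nothing extra here but is no less valid once made precise. Note also that your first, discarded fix (applying Berbee to $W'=(V_{1:N-1},V_{1:N-1}^{*})$) does \emph{not} actually inflate the constant: since $\sigma(W')\subset\sigma(V_{1:N-1},U')$ with $U'$ the accumulated auxiliary randomness, the same identity \eqref{equequbetmixind} gives $\beta(\sigma(V_{1:N-1},U'),\sigma(V_{N}))=\beta(\sigma(V_{1:N-1}),\sigma(V_{N}))$ provided $U'$ is independent of the whole sequence — you were one small lemma away from a cleaner version of your own argument.

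The one point you must tighten: your inductive hypothesis has to record that \emph{all} auxiliary randomness $U'$ used to build $V_{1:N-1}^{*}$ is independent of the entire original sequence $V_{1:N}$ — including the not-yet-treated coordinate $V_{N}$ — and your phrase ``independent of everything introduced at stage $N$'' does not cover this, since $V_{N}$ is part of the original data. Without it the step fails: if $U'$ carried information about $V_{N}$, then $V_{1:N-1}^{*}$ could depend on $V_{N}$ beyond what $V_{1:N-1}$ reveals, and since $V_{N}^{*}=V_{N}$ with high probability, $V_{N}^{*}$ would not be independent of $V_{1:N-1}^{*}$ even though it is independent of $V_{1:N-1}$. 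With the strengthened hypothesis, the chain $U'\perp(V_{1:N},U)$ $\Rightarrow$ $V_{N}\perp U'\mid V_{1:N-1}$ $\Rightarrow$ $\P(V_{N}^{*}\in A\mid V_{1:N-1},U')=\P(V_{N}^{*}\in A)$ goes through and your conclusion holds; this verification is asserted but not carried out in your writeup.
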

\begin{proof} 
\nch{We start with a preliminary observation: notice that if ${V}_{1}, {V}_{2}, V$ are  random variables with $V$ independent of $\sigma({V}_{1},{V}_{2})$ then for any Borel set $E_{2}\subset S_{2}$, 
\begin{align}
\Prob{{V}_{2}\in E_{2}|\sigma({V}_{1})}= \Prob{{V}_{2}\in E_{2}|\sigma({V}_{1},{V})},
\end{align} 
$\P-$a.s. Using the characterization in the first equality of \eqref{equcharbetmixcoefuse} (with obvious adjustments on notation) and the symmetry of $\beta(\cdot,\cdot)$,
\begin{align}
\beta(\sigma({V}_{1},{V}),\sigma({V}_{2}))=&\Esp{\sup_{n}|\Prob{V_{2}\in E_{2,n}}-\Prob{V_{2}\in E_{2,n}|\sigma(V_{1},V)}|} & \\
=& \Esp{\sup_{n}|\Prob{V_{2}\in E_{2,n}}-\Prob{V_{2}\in E_{2,n}|\sigma(V_{1})}|}&=\beta(\sigma({V}_{1}),\sigma({V}_{2})).
\label{equequbetmixind}
\end{align}}
Now we prove the statement. First, we assume that $N\geq 2$ (otherwise the conclusion is trivial, even without the vacuous property of independence, for $V_{1}^{*}:=V_{1}$).

Let now ${V}_{1:N}$ be a  random sequence in $S^\otimes_{1:N}$. We will construct a sequence ${V}_{2:N}^{*}$ satisfying, for all $1\leq k <N$, the  property {\bf P(k)} defined by 

\medskip

\begin{description}\item {\bf P(k):} The sequence $V_{k+1:N}^{*}$  is such  that, for $k\leq j< N$, 
\begin{enumerate}
\item $V_{j+1}$ and $V_{j+1}^{*}$ are identically distributed with $\beta(\sigma(V_{1:j}),\sigma(V_{j+1}))=\P(V_{j+1}\neq V_{j+1}^{*})$.

\item The vectors  $V_{1:j}$ and $V_{j+1:\nch{N}}^{*}$ are independent.
\end{enumerate}
\end{description}
which is easily seen to be sufficient to prove the claim of Lemma \ref{genberlem} by defining $V_{1}^{*}:=V_{1}$.

We will construct $V_{2:N}^{*}$ by backward induction: start defining $V_{N}^{*}$  by applying Lemma \ref{berlem} with ${V}={V}_{1:N-1}$ and ${W}={V}_{N}$. This verifies the satisfaction of {\bf P(N-1)}.

Now, assume that {\bf P(k)} has been verified by $V_{k+1:N}^{*}$ for some $1\leq k<N$. An application of Berbee's lemma with $V:=(V_{1:k-1},V_{k+1:N}^{*})$  and  $W:=V_{k}$ guarantees the existence of a random variable $V_{k}^{*}$ distributed as $V_{k}$ and independent of $\sigma(V_{1:k-1},V_{k+1:N}^{*})$ such that  
\begin{align}
\label{equproberlemgen}
\P(V_{k}\neq V_{k}^{*})=\beta(\sigma(V_{1:k-1},V_{k+1:N}^{*}),\sigma(V_{k}))=
\beta(\sigma(V_{1:k-1}),\sigma(V_{k})),
\end{align} 
where the last equality follows by an application of \eqref{equequbetmixind}. The augmented sequence $V_{k:N}^{*}$ satisfies therefore {\bf P(k-1).} After $N-1$ steps this gives the desired construction. 
\end{proof}
 
\subsectionmark{A general estimate}
\subsection{A general estimate for decoupled averages}
\subsectionmark{A general estimate}

Our next result, Theorem \ref{theconbet}, is an  inequality relating the distribution function of certain random variables defined by suprema and associated  to a {composable pair $(Z_{1:n}, \cG_{1:n})$} to the corresponding distribution functions over sets of indexes in a partition $\cJ$ of $\{1,\dots,n\}$ and a ``decoupling'' of $Z_{1:n}$ over each one of the set of indexes in $\cJ$.

Indeed, the inequality \eqref{equbousumpar}, combined with Lemma \ref{genberlem}, allows us to relate distribution functions as in the left--hand side of \eqref{equbousumpar} to a sum of similar distribution functions {\it which are defined for independent sequences}, controlling the additional error with the $\beta-$dependence coefficients associated to $\cJ$ in Definition \ref{defbetmixpaspre}.

\begin{thm}
\label{theconbet}
Let $\cJ$ and  $(Z_{1:n},\cG_{1:n})$ be as in Lemma \ref{thedisfungen}. There exists a sequence $Z_{1:n}^{*}$ with the following properties:
\begin{enumerate}
\item \label{prosammar} For every $k\in \{1,\dots,n\}$, the distributions of $Z_{k}^{*}$ and $Z_{k}$ are the same. 
\item \label{proindblo} For every $J\in \cJ$, $Z_{J}^{*}$ is an independent sequence.

\item \label{procomine} The inequality
 \begin{align}
 \begin{split}
&\Prob{\sup_{{g_{1:n}}\in\cG_{1:n}}(a\emzon+b\mmzon) g_{1:n} \geq t}\\
&\qquad \leq \sum_{J\in \cJ}\left(\Prob{\sup_{{ g_{J}}\in\cG_{J}}(a\emzJsta+b\mmzJ) g_{J}\geq t}+\sum_{k\in J}\beta_{Z_{J}}(1,k)\right).
 \end{split}
 \label{equbousumparind}
\end{align} 
holds \nch{for every $(a,b,t)\in \R^{3}$.}
\end{enumerate}
\end{thm}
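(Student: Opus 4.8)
The plan is to combine the ``union bound'' of Lemma \ref{thedisfungen} with the Generalised Berbee's Lemma \ref{genberlem} applied separately inside each block $J\in\cJ$. First I would invoke Lemma \ref{thedisfungen} to bound the left-hand side of \eqref{equbousumparind} by $\sum_{J\in\cJ}\Prob{\sup_{g_{J}\in\cG_{J}}(a\emzJ+b\mmzJ)g_{J}\geq t}$, so that the problem is reduced to treating each block $J$ in isolation. For a fixed $J\in\cJ$, enumerate its elements in increasing order as $J=\{j_{1}<\dots<j_{r}\}$ and apply Lemma \ref{genberlem} to the finite sequence $(Z_{j_{1}},\dots,Z_{j_{r}})$: this produces an independent sequence $Z_{J}^{*}=(Z_{j_{1}}^{*},\dots,Z_{j_{r}}^{*})$ with $Z_{j_{i}}^{*}\overset{d}{=}Z_{j_{i}}$ (giving property \ref{prosammar} and property \ref{proindblo}) and with $\Prob{Z_{j_{i}}\neq Z_{j_{i}}^{*}}=\beta(\sigma(Z_{j_{1}:j_{i-1}}),\sigma(Z_{j_{i}}))$. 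Since the blocks in $\cJ$ are disjoint, performing this construction block by block yields a single global sequence $Z_{1:n}^{*}$ satisfying \ref{prosammar} and \ref{proindblo}.

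Next I would identify the Berbee coupling error with the $\beta-$coefficients of $m-$dependence from Definition \ref{defbetmixpaspre}. The key observation is that for $j_{i}\in J$, the $J-$indices strictly smaller than $j_{i}$ form a subset of $\{1,\dots,j_{i}-1\}$, so by monotonicity of $\beta(\cdot,\cdot)$ in each argument (Remark \ref{remprobetmix}\eqref{itemiii}) one has $\beta(\sigma(Z_{J\cap[1,j_{i}-1]}),\sigma(Z_{j_{i}}))\leq \beta(\sigma(Z_{\{1,\dots,j_{i}-1\}}),\sigma(Z_{j_{i}}))$; but actually, writing the Berbee error directly in terms of $Z_{J}$ gives $\Prob{Z_{j_{i}}\neq Z_{j_{i}}^{*}}=\beta(\sigma(Z_{J\cap[1,j_{i}-1]}),\sigma(Z_{J\cap\{j_{i}\}}))=\beta_{Z_{J}}(1,j_{i})$, which matches exactly the summand $\sum_{k\in J}\beta_{Z_{J}}(1,k)$ appearing on the right-hand side of \eqref{equbousumparind} (using that $\beta_{Z_{J}}(1,k)=0$ when $k\notin J$, Property \ref{prop:remrbetzjinwor}\eqref{betzerknotinJ}, so only $k\in J$ contribute). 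I would state this as a small lemma or inline remark to keep the bookkeeping clean.

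The final step is to pass from the coupled average $\emzJ g_{J}$ to the decoupled average $\emzJsta g_{J}$ at the cost of this error. Fix $J$. On the event $\{Z_{k}=Z_{k}^{*}\text{ for all }k\in J\}$ we have $\emzJ g_{J}=\emzJsta g_{J}$ pointwise for every $g_{J}$, hence the two suprema coincide there; consequently
\begin{align}
\Prob{\sup_{g_{J}\in\cG_{J}}(a\emzJ+b\mmzJ)g_{J}\geq t}
&\leq \Prob{\sup_{g_{J}\in\cG_{J}}(a\emzJsta+b\mmzJ)g_{J}\geq t}+\Prob{\exists k\in J:\ Z_{k}\neq Z_{k}^{*}}\\
&\leq \Prob{\sup_{g_{J}\in\cG_{J}}(a\emzJsta+b\mmzJ)g_{J}\geq t}+\sum_{k\in J}\Prob{Z_{k}\neq Z_{k}^{*}},
\end{align}
where the last step is the union bound; note that the average mean $\mmzJ g_{J}$ is unchanged by the coupling since it depends only on the laws of the $Z_{k}$, which are preserved by property \ref{prosammar}. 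Substituting $\Prob{Z_{k}\neq Z_{k}^{*}}=\beta_{Z_{J}}(1,k)$ and summing over $J\in\cJ$ gives \eqref{equbousumparind}.

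The main obstacle I anticipate is the careful bookkeeping in the second step: one must be sure that the Berbee error produced for the reindexed finite sequence $(Z_{j_{1}},\dots,Z_{j_{r}})$ really equals $\beta_{Z_{J}}(1,j_{i})$ and not some other coefficient, and in particular that using the \emph{past within the block} rather than the full past $\{1,\dots,j_{i}-1\}$ is legitimate — this is exactly what Lemma \ref{genberlem} delivers, since it only conditions on $\sigma(V_{1:k-1})$ for the reindexed sequence $V=Z_{J}$. Everything else (the two union bounds, the invariance of $\mmzJ$, the disjointness of blocks allowing a coherent global $Z_{1:n}^{*}$) is routine. No gap with the independent-case estimates arises here because the bound is purely in terms of the coupling probabilities, which the $\beta-$coefficients control exactly.
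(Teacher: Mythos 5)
Your proposal is correct and follows essentially the same route as the paper: Lemma \ref{thedisfungen} to reduce to blocks, the Generalised Berbee Lemma applied to each block $Z_J$ in increasing order of indices, the identification $\Prob{Z_k\neq Z_k^*}=\beta_{Z_J}(1,k)$, and the event inclusion plus union bound to pass from $\emzJ$ to $\emzJsta$ while noting that $\mmzJ$ is unchanged. No gaps.
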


\begin{proof}
We start by an application of Lemma \ref{thedisfungen}, the next step is a further estimate of the right--hand side of \eqref{equbousumpar} via Lemma \ref{genberlem}.

Indeed, fix $J\in \cJ$.  We apply Lemma \ref{genberlem} to $Z_{J}$  to construct $Z_{J}^{*}$: if $J:=\{j_{1},\dots ,j_{N}\}$ in increasing order, construct $Z_{J}^{*}$ by replacing $V_{k}:= Z_{j_{k}}$ in Lemma \ref{genberlem}. 

Properties \ref{prosammar}. and \ref{proindblo}. are immediate from this construction. Notice also that, by the construction and \eqref{equberlemranvardif}, 
\begin{align}
\label{equappberpro}
\P(Z_{k}\neq Z_{k}^{*})=\beta_{Z_{J}}(1,k), \qquad \forall k \in J.
\end{align}

Now, using the inclusion 
\begin{align}
\label{equestberlemequ}
\bigg\{\sup_{{ g_{J}}\in\cG_{J}}(a\emzJ+{b}\mmzJ) g_{J}\geq t\bigg\}
\subset \bigg\{\sup_{{ g_{J}}\in\cG_{J}}(a\emzJsta+\nch{b}\mmzJ)g_{J}\geq t\bigg\}\cup \bigcup_{k\in J}\{Z_{k}\neq Z_{k}^{*}\},
\end{align} 
  \eqref{equbousumparind} follows from the union bound via  Lemma \ref{thedisfungen} and \eqref{equappberpro}.
\end{proof}

\nch{\begin{remark}[Complement to Remark \ref{remgenestfamfun}]\label{remvergenfamfunlif} Let $\{K_{J}\}_{J}$ be a family of functionals as in Remark \ref{remgenestfamfun}, then exactly the same argument as in the proof of Theorem \ref{theconbet} gives that, for every finite $I\subset \N$ and every partition $\cJ$ of $I$ 
\begin{align}
 \Prob{K_{I}(Z_{I}) \geq t}
 \leq \sum_{J\in \cJ}\left(\Prob{K_{J}(Z_{J}^{*})\geq t}+\sum_{k\in J}\beta_{Z_{J}}(1,k)\right),
 \label{equbousumparindgen}
\end{align}
where $Z_{I}^{*}$ satisfies properties \ref{prosammar}. and \ref{proindblo}. above (with $\{1,\dots,n\}$ replaced by $I$). 
\end{remark}

\begin{remark}[Relationship with Bernstein's method]
Let $\{K_{J}\}_{J}$ be again as in Remark \ref{remgenestfamfun}. Given a finite set $J\subset \N$, a random element $Z_{J}$ of $S_{J}^{\otimes}$, a partition $\{J_{1},\dots,J_{r}\}$ of $J$, and a  partition $I_{1},\dots, I_{s}$ of $\{1,\dots, r\}$,   denote, for every $k\in \{1,\dots,r\}$, $J(I_{k}):=\cup_{s\in I_{k}}J_{s}$. Then using an argument similar to the one in the proof of Theorem \ref{theconbet} it is easy to prove that 
\begin{align}
\label{equestlifbermet}
\Prob{K_{J}(Z_{J})\geq t}\leq \sum_{k=1}^{r}\left(\Prob{K_{J({I_{k}})}(Z_{J({I_{k}})}^{**})\geq t}+\sum_{j\in I_{k}}\beta_{(Z_{J_{l}})_{l\in I_{k}}}(1,j)\right)
\end{align}
where 
\begin{enumerate}
\item $Z_{J_{k}}$ and $Z_{J_{k}}^{**}$ have the same distribution, for $k\in 1,\dots, r$.
\item For every fixed $k \in \{1,\dots, r\}$, the sequence $(Z_{J_{l}}^{**})_{l\in I_{k}}$ is independent.
\end{enumerate}  
Assume that $n=2am$ for some $(a,m)\in\N\times\N$. Then one can group the sequence $Z_{1:n}$ into $2m$ disjoint  blocks $J_{1},\dots,J_{2m}$ of successive elements, each of length $a$, and classify the blocks in ``odd'' and ``even'' blocks, which corresponds in \eqref{equestlifbermet} to taking $I_{1}$ and $I_{2}$ as (respectively) the odd and even numbers in $\{1,\dots,2m\}$. An application of \eqref{equestlifbermet} (for $J=\{1,\dots, n\}$) together with an easy adaptation of \eqref{equbetmdis} gives the estimate (with a slight abuse of notation)
\begin{align}
\Prob{K(Z_{1:n})\geq t}\leq \sum_{k=1}^{2}\Prob{K((Z_{J_{l}}^{**})_{l \in I_{k}})\geq t}+2m\beta_{Z_{1:n}}(a),
\end{align}
where, for fixed $k$, each sequence $(Z_{J_{l}}^{**})_{l\in I_{k}}$ is independent. This is the key idea in ``Bernstein's partition method'' (see for instance \cite{KM16} and the references therein). 

The most important   difference between the partitions used in the estimates \eqref{equbousumparindgen} and \eqref{equestlifbermet} is that, in \eqref{equbousumparindgen}, there is dependence {\it within} the blocks $(Z_{J}^{*})_{J\in \cJ}$ but there is independence {\it inside} each block $Z_{{J}}^{*}$. In \eqref{equestlifbermet} the situation is somewhat reversed: {\it for fixed} $k\in \{1,\dots, r\}$, there is independence {\it within} the blocks $(Z_{J_{l}}^{**})_{l\in I_{k}}$, but there is dependence {\it inside} each block $Z_{J_{l}}^{**}$. The reader is invited to consider the consequences of this difference for what follows. 
\end{remark}}

\subsection{Abstract lifting of deviation inequalities}
\label{secabslifdevine}
In this {concluding} part we present a result indicating how to ``lift''  deviation inequalities from the independent to the (possibly) dependent case via Theorem \ref{theconbet}.  The purpose of this result for what follows is to serve as an intermediate step towards the beta--mixing generalization of the deviation estimates proved in  \cite{bargob}. The notation and conventions are those explained in Section \ref{secnot}.

\begin{prop}[Abstract lifting of deviation inqualities]
\label{prop_corsemabsexp} \nch{Let $n\in \N$,}
 let $(a,b,B)\in \R\times \R\times (0,\infty]$, and let $(Z_{1:n},\cG_{1:n})$ be a composable pair such that 
\begin{align}
\label{equconcalgbou}
\sup_{ { g_{1:n}}\in \cG_{1:n}}\sup_{1\leq k\leq n}
||g_{k}(Z_{k})||_{\P,\infty}\leq B.
\end{align}
Moreover,  assume that  there exists a function 
 \begin{align}
 \label{equlabdef}
 L_{a,b}:\{1,\dots, n\}\times [0,\infty]\to [0,\infty) 
 \end{align}
such that for any   $t\geq 0$,  $J\subset \{1,\dots,n\}$  
and some  $Z_{J}^{*}$ with independent entries and the same marginals as $Z_{J}$, we have
\begin{align}
\label{equhypestindsammar}
\Prob{\sup_{ g_{J}\in\cG_{J}}(a\emzJsta+b\mmzJ) g_{J}\geq t}\leq L_{a,b}(|J|,t).
\end{align}
\nch{Let $m\in \{1,\dots,n\}$ and write
\begin{align}
\label{eucalg}
n:=qm+r, \quad \text{with}\ q=\left\lfloor\frac{n}{m}\right\rfloor, \quad  0\leq r <m
\end{align}
for the Euclidean algorithm for $n$ divided by $m$}; then the estimate
\begin{align}
&\Prob{\sup_{ g_{1:n}\in\cG_{1:n}}(a\emzon+b\mmzon) g_{1:n}\geq t} \\[3mm]
 &\qquad \leq (rL_{a,b}(q+1,t)+(m-r)L_{a,b}(q,t)+n\beta_{Z_{1:n}}(m))
\1_{\{t\leq (|a|+|b|)B\}}\\[3mm]
&\qquad\leq (m(L_{a,b}(q+1,t)\lor L_{a,b}(q,t))+n\beta_{Z_{1:n}}(m))
\1_{\{t\leq (|a|+|b|)B\}}
\label{equsemabsconine}
\end{align}
holds (with the convention $L_{a,b}(n+1,t)\equiv L_{a,b}(n,t)$).
\end{prop}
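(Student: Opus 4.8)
The plan is to combine the two tools already assembled in this section: Theorem~\ref{theconbet} (to decouple over the blocks of a partition) and the hypothesis \eqref{equhypestindsammar} (to bound each decoupled term by $L_{a,b}$), with the only remaining task being to choose the partition $\cJ$ of $\{1,\dots,n\}$ well and to bound the $\beta$-terms. First I would dispose of the indicator factor $\1_{\{t\leq(|a|+|b|)B\}}$: under \eqref{equconcalgbou} we have, pointwise, $|(a\emzon+b\mmzon)g_{1:n}|\leq (|a|+|b|)B$ for every $g_{1:n}\in\cG_{1:n}$, hence the supremum is bounded by $(|a|+|b|)B$ as well, so the event on the left-hand side is empty (the probability is $0$) whenever $t>(|a|+|b|)B$. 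This justifies multiplying the whole right-hand side by that indicator, and for the rest of the argument I may assume $t\leq(|a|+|b|)B$.

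Next I would construct the partition. Given $m\in\{1,\dots,n\}$ and the Euclidean division $n=qm+r$ from \eqref{eucalg}, define $\cJ=\{J_{1},\dots,J_{m}\}$ by the ``stride-$m$'' (interleaving) rule $J_{i}:=\{i,\,i+m,\,i+2m,\dots\}\cap\{1,\dots,n\}$ for $i\in\{1,\dots,m\}$. This is genuinely a partition of $\{1,\dots,n\}$ into nonempty sets; by counting, exactly $r$ of these blocks (the ones with $i\leq r$) have cardinality $q+1$ and the remaining $m-r$ have cardinality $q$. Crucially, within each block any two distinct indices differ by at least $m$, so this partition has ``minimal gap'' $m$ in the sense of Property~\ref{prop:remrbetzjinwor}; consequently \eqref{equbetmdis} gives, for every $J\in\cJ$ and every $k\in J$, the bound $\beta_{Z_{J}}(1,k)\leq\beta_{Z_{J}}(1)\leq\beta_{Z_{1:n}}(m)$.

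Now I would apply Theorem~\ref{theconbet} with this $\cJ$: it produces a sequence $Z_{1:n}^{*}$ such that each $Z_{k}^{*}$ has the law of $Z_{k}$, each $Z_{J}^{*}$ ($J\in\cJ$) has independent entries, and
\begin{align}
\label{equproposplanmain}
\Prob{\sup_{g_{1:n}\in\cG_{1:n}}(a\emzon+b\mmzon)g_{1:n}\geq t}
\leq\sum_{J\in\cJ}\Bigl(\Prob{\sup_{g_{J}\in\cG_{J}}(a\emzJsta+b\mmzJ)g_{J}\geq t}+\sum_{k\in J}\beta_{Z_{J}}(1,k)\Bigr).
\end{align}
For each $J\in\cJ$ the pair $(Z_{J}^{*},\cG_{J})$ has exactly the form required by \eqref{equhypestindsammar} (same marginals as $Z_{J}$, independent entries), so the probability inside the sum is at most $L_{a,b}(|J|,t)$. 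Splitting $\cJ$ into the $r$ blocks of size $q+1$ and the $m-r$ blocks of size $q$, the $L$-contributions sum to $rL_{a,b}(q+1,t)+(m-r)L_{a,b}(q,t)$; and since $\sum_{J\in\cJ}\sum_{k\in J}1=n$ and each $\beta_{Z_{J}}(1,k)\leq\beta_{Z_{1:n}}(m)$ by the previous paragraph, the total $\beta$-contribution is at most $n\beta_{Z_{1:n}}(m)$. This yields the first inequality of \eqref{equsemabsconine}; the second follows by bounding $rL_{a,b}(q+1,t)+(m-r)L_{a,b}(q,t)\leq m\bigl(L_{a,b}(q+1,t)\lor L_{a,b}(q,t)\bigr)$, with the convention $L_{a,b}(n+1,t)\equiv L_{a,b}(n,t)$ covering the edge case $m=1$ (where $q=n$, $r=0$, and the block of size $q+1$ does not occur). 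I do not anticipate a serious obstacle here: the only points needing a little care are verifying that the interleaving partition realizes the minimal gap $m$ (so that \eqref{equbetmdis} applies with $\beta_{Z_{1:n}}(m)$ rather than a weaker coefficient), the cardinality bookkeeping $r$ vs.\ $m-r$, and the boundedness argument that produces the indicator — all routine.
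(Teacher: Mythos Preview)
Your proposal is correct and follows essentially the same route as the paper: the same stride-$m$ (interleaving) partition $\cJ=\{J_{1},\dots,J_{m}\}$, the same application of Theorem~\ref{theconbet}, the same use of \eqref{equbetmdis} to bound each $\beta_{Z_{J}}(1,k)$ by $\beta_{Z_{1:n}}(m)$, and the same cardinality count $r$ vs.\ $m-r$. If anything, your treatment of the indicator $\1_{\{t\leq(|a|+|b|)B\}}$ via \eqref{equconcalgbou} is more explicit than the paper's, which simply inserts it without comment.
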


\nch{The  inequality \eqref{equhypestindsammar} as an assumption is quite standard: it says that for a given class of functions $\cG_{J}$, the uniform deviation depends of the size of the sample $|J|$ and the amplitude of the deviation $t$, consistently with many results on uniform deviation inequalities (see for instance \cite{gyor:kohl:krzy:walk:02, ledo:tala:13}). As we will see later, the  complexity of the class $\cG_{J}$ typically appears in $L_{a,b}(.)$.}
\begin{proof}
The second inequality in \eqref{equsemabsconine} is trivial
. We proceed to prove the first inequality. In view of the Euclidean decomposition 
$n=qm+r$,
consider the {\it $m-$steps partition} 
\begin{align}
\label{equdefjmste}
\cJ_{m\, steps}:=\{J_{1},\dots, J_{m}\}
\end{align}
  of $\{1,\dots, n\}$ specified by 
\begin{align}
J_{k}:=\big\{k+lm\big\}_{l=0}^{q}, & \qquad 1\leq k\leq r \\
J_{k}:=\big\{k+lm\big\}_{l=0}^{q-1}, & \qquad r<k\leq m.
\label{mpluonesteparequ}
\end{align}
In words, $J_{k}$ is the set obtained by starting from $k$ and moving to the right in steps of $m$ units as far  as possible before quitting the set $\{1,\dots,n\}$. Clearly, $|J_{k}|=q+1$ for $1\leq k\leq r$ and $|J_{k}|=q$ for $r<k\leq m$.

We apply Theorem \ref{theconbet} with $\cJ:=\cJ_{m\,steps}$. This gives the  upper bound
\begin{align}
\left(rL_{a,b}(q+1,t)+(m-r)L_{a,b}(q,t)+\sum_{k=1}^{m}\sum_{j\in J_{k}}{{\beta}_{Z_{J_{k}}}(1,j)}\right)\1_{\{t\leq (|a|+|b|)B\}}
\label{estthebetmixineori}
\end{align}
for the left--hand side of \eqref{equsemabsconine}. \nch{The conclusion follows using  the estimate \eqref{equbetmdis} which gives $\sup_{j\in J_k}\beta_{Z_{J_k}}(1,j)\leq \beta_{Z_{J_k}}(1) \leq \beta_{Z_{1:n}}(m)$
and the fact that  $\sum_{k=1}^{m}\sum_{j\in J_{k}}1=n$. 
}
\end{proof}

\section{Some applications to nonparametric regression}
\label{secapp}
\sectionmark{Applications to nonparametric regression}
In this section, we develop some of the  applications of the results in Section \ref{secbribetmixind} to the problems addressed, in the context of independent samples, within \cite{bargob}. The notation, again, comes from Section \ref{secnot}.

\subsection{Empirical covering numbers}

\subsectionmark{Covering numbers}
\nch{The functions $L_{a,b}$ in \eqref{equhypestindsammar} usually depend on the complexity of the functions class $\cG_J$, through  its covering number w.r.t.  a suitable semimetric, see the seminal work \cite{VC71}. We now recall the notion of $r-${\it coverings} and {\it covering numbers}, taking care of extending it to our case of sequences of spaces $\cG_J$.}

\begin{dfn}[$r-$covering, covering numbers]
\label{defrcov}
\nch{Let $(\cG,d)$ be a semimetric space}, let $\cG_{0}\subset \cG$, and let $r\in [0,\infty)$. An $r-$covering of $\cG_{0}$ with respect to $d$ is a set $\cG'\subset \cG$ with the property that, for every $g\in \cG_{0}$, there exists $g'\in \cG'$ satisfying
\begin{align}
\label{equdefcon}
d(g,g')<r.
\end{align}
The $r-$covering number of $\cG_{0}$ with respect to $d$ is defined as
\begin{align}
\label{equdefrcovnum}
\cN^{(d)}(r,\cG_{0}):=\min\{|\cG'|: \mbox{\, $\cG'\subset \cG$ is an $r-$covering of $\cG_{0}$ with respect to $d$}\}.
\end{align}
\end{dfn}

Notice that the meaning of $\cN^{(d)}(r,\cG_{0})$ depends not only on the set $\cG_{0}$ and the metric $d|_{_{\cG_{0}\times\cG_{0}}}$, but also on the space $\cG$ where $d$ is defined.

The following type of covering numbers are of special relevance for us.
\begin{dfn}[Empirical covering numbers]
\label{defempcovnum}
Let $J\subset \N$ be a finite set,  let $\cG_{J}\subset \cL^{\otimes_{J}}_{S}$ be a sequential family of functions, and let $z_{_{J}}\in S_{J}^{\otimes}$ be given. We define the  empirical $L_{1}$ $r-$covering numbers of $\cG_{J}$ at $z_{_{J}}$, $\cN_{1}(r,\cG_{J},z_{_{J}})$, as
\begin{align}
\cN_{1}(r,\cG_{J},z_{_J}):=\cN^{(d^{1}_{z_{_J}})}(r,\cG_{J}),
\end{align}
where $d_{z_{_J}}^{1}$ is the empirical $L^{1}-$seminorm $d^{1}_{z_{_J}}(g_{J},g_{J}'):=\emzsmaJ |g_{J}-g_{J}'|$ on the product space  $\cL^{\otimes_{J}}_{S}$.
\end{dfn}

\begin{remark}[Measurability issues]
\label{remmeacovnum}
It is clear that, if $Z_{J}$ is a random element of $S^{\otimes}_{J}$, $\omega\mapsto \cN_{1}(r,\cG_{J},Z_{J}(\omega))$ is a nonnegative function. To avoid unnecessary measurability discussions, we will denote by $\Esp{\cN_{1}(r,\cG_{J},Z_{J})}$ the {\it outer} expectation of $\cN_{1}(r,\cG_{J},Z_{J})$: 
\begin{align}
\label{outexpcovnum}
\Esp{\cN_{1}(r,\cG_{J},Z_{J})}:=\inf_{h}\Esp{h},
\end{align}
where the infimum is taken over the random variables $h:\Omega\to \R$ with $\cN_{1}(r,\cG_{J},Z_{J})\leq h$ (except on a set of $\P-$measure zero), with the convention $\inf\emptyset=\infty$. 
\end{remark}

\subsection{Uniform deviation inequalities for  dependent samples}

\subsectionmark{Uniform deviations}
We start by recalling  the following result, which is a  consequence of \cite[Theorem 2.2]{bargob} (with easy simplifications left to the reader). \nch{We will use it as a ``toy'' theorem, whose extension to the dependent case will  illustrate some  arguments that are not written in detail later.}
\begin{thm}[\nch{Uniform deviation probability, independent version}]
\label{the22bargob} Let $X_{1:n}$ be a random element of $(\R^{d})^{n}$ with independent entries, 
and assume that $(X_{1:n},\cF_{1:n})$ is a composable pair where $\cF_{1:n}$ is a pointwise measurable sequential family\footnote{I.e. such that there exists $\{f_{1:n}^{(k)}\}_{k}\subset \cF_{1:n}$ with the property that, for every $f_{1:n} \in \cF_{1:n}$, there exists a sequence $(k_{l})_{l}$ satisfying $\lim_{l} f_{1:n}^{(k_{l})}=f_{1:n}$ pointwise.} 
with $f_{k}:\R^{d}\to [0,B]$ ($k=1,\dots,n$) for some $B>0$  and for each $f_{1:n}\in \cF_{1:n}$. Then for
\begin{align}
\label{equparthe22bargob:1}
\nch{
(\epsilon,c,\gamma,\gamma')\in 
\times(0,1)\times(1,\infty)\times(1,\infty)\times(1,\infty)},
\end{align}
the estimate
\begin{align}
&\Prob{\sup_{f_{1:n}\in \cF_{1:n}}((1-\epsilon)\emxon-(1+\epsilon)\mmxon)f_{1:n}>t}\\
&\quad \leq
\frac{2\gamma}{\gamma-1}\Esp{\cN_{1}(\frac{1}{2}u_{1}(c,\gamma')\,t,\cF_{1:n},X_{1:n})}\exp(-\frac{1}{2B}u_{2}(c,\gamma')\epsilon nt)
\label{equthe22bargob}
\end{align}
holds with \begin{align}
\label{equdefuj}
u_{1}(c,\gamma'):=(1-\frac{1}{c})\frac{1}{\gamma'}, &\qquad u_{2}(c,\gamma'):= (1-\frac{1}{c})^{2}(1-\frac{1}{\gamma'}),
\end{align} 
provided that 
\begin{align}
\label{equconthe22bargob}
t\geq \frac{Bc}{2}\left(\frac{\gamma }{n}\right)^{1/2}.
\end{align}
\end{thm}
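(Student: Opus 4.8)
\textbf{Proof plan for Theorem \ref{the22bargob} (the ``toy'' independent-case result).}

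The plan is to obtain the statement as a direct specialization of \cite[Theorem 2.2]{bargob}, so the real work is matching notation and checking that the stated hypotheses are exactly what that theorem needs. First I would recall precisely the statement of \cite[Theorem 2.2]{bargob}: it is a uniform deviation inequality for the one-sided deviation of a weighted combination of the empirical mean and the average mean over a composable pair with independent coordinates, valid under a uniform bound $B$ on the (nonnegative) functions in the family, a pointwise-measurability assumption on $\cF_{1:n}$, and a lower bound on the threshold $t$ of the form \eqref{equconthe22bargob}; its conclusion is an exponential bound with a prefactor involving the expected empirical $L_{1}$ covering number $\Esp{\cN_{1}(\cdot,\cF_{1:n},X_{1:n})}$ evaluated at a radius proportional to $t$.

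Next I would perform the identification. The coefficients $(a,b)=(1-\epsilon,-(1+\epsilon))$ here play the role of the generic weights in \cite{bargob}, and $\epsilon\in(0,1)$ guarantees $a>0$ and $b<0$, which is the regime of that theorem. The parameters $(c,\gamma,\gamma')$ here should correspond to the free parameters in \cite[Theorem 2.2]{bargob} governing, respectively, the radius of the covering net (through $u_{1}$), the strength of the exponential rate (through $u_{2}$), and the prefactor $\frac{2\gamma}{\gamma-1}$; I would quote the exact formulas $u_{1}(c,\gamma')=(1-1/c)/\gamma'$ and $u_{2}(c,\gamma')=(1-1/c)^{2}(1-1/\gamma')$ from \eqref{equdefuj} and verify they coincide (after the obvious renaming) with the constants appearing in \cite{bargob}. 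The ``easy simplifications left to the reader'' mentioned in the excerpt presumably amount to absorbing some of the more general structure of \cite[Theorem 2.2]{bargob} (e.g. a possibly sharper or two-sided version, or a version with separate $\epsilon$'s) into this cleaner one-parameter form; I would spell out which choices of the original parameters produce exactly \eqref{equthe22bargob}, and check that condition \eqref{equconthe22bargob}, namely $t\geq \tfrac{Bc}{2}(\gamma/n)^{1/2}$, is precisely the admissibility condition of \cite{bargob} under those choices.

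The main obstacle is not any hard estimate — all of that is inside \cite{bargob} — but rather bookkeeping: making sure the normalization conventions (the $1/n$ in $\emxon$ and $\mmxon$, the factor $\tfrac12$ in the covering radius $\tfrac12 u_{1}(c,\gamma')\,t$, the factor $\tfrac1{2B}$ and the $\epsilon n t$ in the exponent) are transcribed consistently, and that the pointwise-measurability hypothesis stated here is the one under which the covering-number prefactor in \cite{bargob} is legitimate (so that the outer expectation in Remark \ref{remmeacovnum} is what appears). If \cite[Theorem 2.2]{bargob} is stated with $f_{k}$ valued in $[0,1]$ rather than $[0,B]$, I would insert the rescaling $f_{k}\mapsto f_{k}/B$, noting that the empirical $L_{1}$ covering number is then evaluated at radius divided by $B$, which is exactly what the appearance of $t/B$-type quantities in \eqref{equthe22bargob} reflects; conversely if it already carries the bound $B$ explicitly, no rescaling is needed. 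Once the dictionary is fixed, the proof is the single sentence ``apply \cite[Theorem 2.2]{bargob} with the above choice of parameters.''
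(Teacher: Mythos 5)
Your proposal matches the paper exactly: the paper gives no independent proof of Theorem \ref{the22bargob}, stating only that it is ``a consequence of \cite[Theorem 2.2]{bargob} (with easy simplifications left to the reader),'' which is precisely the parameter-matching specialization you describe. Your more explicit bookkeeping (identifying $(a,b)=(1-\epsilon,-(1+\epsilon))$, the roles of $(c,\gamma,\gamma')$ in $u_{1},u_{2}$ and the prefactor, and the admissibility condition \eqref{equconthe22bargob}) is exactly the intended content of those ``easy simplifications.''
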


\nch{Our extension of this result will be made with the help of the following notion, which we will discuss briefly in Section \ref{remcovnumothmet}:}
\begin{dfn}[Uniform $L^{1}-$entropy estimates]
\label{defl1entest}
Let $J\subset \N$ and let $\cG_{J}\subset \cL^{\otimes_{J}}_{S}$ be given. A Borel-measurable function $\lambda: \N \times(0,\infty)\to [1,\infty]$ is called an {empirical $L_{1}-$\it uniform entropy estimate of $\cG_{J}$} (or simply, a uniform entropy estimate of $\cG_{J}$) if for every finite subset $J'\subset J$
and every $r\in(0,\infty)$
\begin{align}
\label{equdefl1entest}
\log\left(\sup_{z_{J'}\in S^{\otimes}_{J'}}\cN_{1}(r,\cG_{J'},z_{_{J'}})\right)\leq\lambda(|J'|,r). 
\end{align}
\end{dfn}

\nch{Going back to the extension of Theorem \ref{the22bargob}, } 
\nch{assume the existence of a uniform entropy estimate $\lambda$ of $\cG_{1:n}$, then $\lambda$ is clearly a uniform entropy estimate of $\cG_{J}$ for every $J\subset \{1,\dots,n\}$; in addition, for any random element $Z_{J}$  of $S^{\otimes}_{J}$, we have
\begin{align}
\label{estespcovlam}
\Esp{\cN_{1}(r,\cG_{J},Z_{J}))}\leq \exp(\lambda(|J|,r)).
\end{align}
Consequently,} under the hypotheses of Theorem \ref{the22bargob}, we have \nch{that} the inequality
\begin{align}
&\Prob{\sup_{f_{J}\in \cF_{J}}((1-\epsilon)\emxJ-(1+\epsilon)\mmxJ)f_{J}>t}  \\
&\quad\leq
\frac{2\gamma}{\gamma-1}\exp\left(-\frac{1}{2B}u_{2}(c,\gamma')\epsilon |J|t+\lambda\Big(|J|,\frac{1}{2}u_{1}(c,\gamma')\,t\Big)\right)
=: L_{c,\gamma,\gamma',\epsilon}(|J|,t)\qquad
\label{equthe22bargobent}
\end{align}
holds for every $J\subset \{1,\dots, n\}$, provided this time that 
$t\geq \frac{Bc}{2}(\frac{\gamma}{|J|})^{1/2}.$
We can ``hide'' this restriction on $t$ by extending \eqref{equthe22bargobent} to the estimate
\begin{align}
\Prob{\sup_{f_{J}\in \cF_{J}}((1-\epsilon)\emxJ-(1+\epsilon)\mmxJ)f_{J}>t}
\leq \1_{\{t< \frac{Bc}{2}\left(\frac{\gamma }{|J|}\right)^{1/2}\}} + L_{c,\gamma,\gamma',\epsilon}(|J|,t)\1_{\{t\geq \frac{Bc}{2}\left(\frac{\gamma }{|J|}\right)^{1/2}\}},
\label{equthe22bargobenthid}
\end{align}
which holds for every $J\subset \{1,\dots,n\}$ and every $t>0$, always under the hypotheses of Theorem \ref{the22bargob}.  This, together with Proposition \ref{prop_corsemabsexp}, allows us to deduce the following ``$\beta-$ version'' of Theorem \ref{the22bargob}.
\begin{thm}[\nch{Uniform deviation probability, $\beta-$version}]
\label{the22bargobbetmix}
Let $X_{1:\infty}$ be a random sequence in $(\R^{d})^{\N}$. For $n\in \N$,  assume that $(X_{1:n},\cF_{1:n})$ is a composable pair where $\cF_{1:n}$ is a pointwise measurable sequential family and each $f_{1:n}\in \cF_{1:n}$ is a sequence of functions  with $f_{k}:\R^{d}\to [0,B]$ ($k=1,\dots,n$) for some $B>0$. Assume that $\lambda$ is a uniform entropy estimate of $\cF_{1:n}$ 
(Definition \ref{defl1entest}),  and let
\begin{align}
\label{equparthe22bargob:2}
\nch{(
\epsilon,c,\gamma,\gamma')\in  
(0,1)\times(1,\infty)\times(1,\infty)\times(1,\infty).}
\end{align}
Then, with $u_{j}$ ($j=1,2$) as in \eqref{equdefuj}, with $L_{c,\gamma,\gamma',\epsilon}:\{1,\dots,n\}\times [0,\infty)\to [0,\infty)$ as in \eqref{equthe22bargobent}  and with $\beta_{X_{1:\infty}}(\cdot)$ as in \eqref{equsupbetdepcoe}, the estimate
\begin{align}
&\Prob{\sup_{f_{1:n}\in \cF_{1:n}}((1-\epsilon)\emxon-(1+\epsilon)\mmxon)f_{1:n}\geq t}\\
&\quad\leq 
\bigg[m\Big(L_{c,\gamma,\gamma',\epsilon}(\left\lfloor\frac{n}{m}\right\rfloor,t)\lor L_{c,\gamma,\gamma',\epsilon}(\left\lfloor\frac{n}{m}\right\rfloor+1,t)\Big)+n\beta_{X_{1:\infty}}(m)\bigg]\1_{\{ t\leq 2B\}},
\end{align}
holds for every $m\in \{1,\dots,n\}$ (with the convention $L_{c,\gamma,\gamma',\epsilon}(n+1,t)\equiv L_{c,\gamma,\gamma',\epsilon}(n,t)$), provided that
\begin{align}
t\geq \frac{Bc}{2}\left(\frac{\gamma }{\left\lfloor\frac{n}{m}\right\rfloor}\right)^{1/2}.
\end{align}
\end{thm}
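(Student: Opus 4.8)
The plan is to assemble Theorem \ref{the22bargobbetmix} directly from the two ingredients already prepared in the text: the entropy-ified, restriction-hidden independent estimate \eqref{equthe22bargobenthid}, and the abstract lifting Proposition \ref{prop_corsemabsexp}. First I would fix $n\in\N$ and the parameters $(\epsilon,c,\gamma,\gamma')$ as in \eqref{equparthe22bargob:2}, and take $a:=1-\epsilon$, $b:=-(1+\epsilon)$, so that $|a|+|b|=(1-\epsilon)+(1+\epsilon)=2$ and the indicator $\1_{\{t\leq(|a|+|b|)B\}}$ appearing in \eqref{equsemabsconine} becomes exactly $\1_{\{t\leq 2B\}}$. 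Next I would check the hypotheses of Proposition \ref{prop_corsemabsexp}: the uniform bound \eqref{equconcalgbou} holds with this $B$ because every $f_k$ takes values in $[0,B]$; and the role of $L_{a,b}(|J|,t)$ in \eqref{equhypestindsammar} is played by $L_{c,\gamma,\gamma',\epsilon}(|J|,t)$ from \eqref{equthe22bargobent}. Here I would invoke \eqref{estespcovlam} — which bounds the expected covering number by $\exp(\lambda(|J|,r))$ using that $\lambda$ is a uniform entropy estimate of $\cF_{1:n}$, hence of each $\cF_J$ — together with Theorem \ref{the22bargob} applied to the independent sequence $X_J^*$ (whose entries are independent with the same marginals as $X_J$, as supplied by Proposition \ref{prop_corsemabsexp}'s construction), to conclude that \eqref{equhypestindsammar} holds \emph{in the regime} $t\geq \frac{Bc}{2}(\gamma/|J|)^{1/2}$.

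The one subtlety, which I expect to be the main (though modest) obstacle, is the $t$-restriction: Proposition \ref{prop_corsemabsexp} wants \eqref{equhypestindsammar} to hold for \emph{all} $t\geq 0$, whereas Theorem \ref{the22bargob} only gives it under \eqref{equconthe22bargob}. The resolution is exactly the bookkeeping already carried out in \eqref{equthe22bargobenthid}: one defines $\widetilde L(|J|,t):=\1_{\{t<\frac{Bc}{2}(\gamma/|J|)^{1/2}\}}+L_{c,\gamma,\gamma',\epsilon}(|J|,t)\1_{\{t\geq\frac{Bc}{2}(\gamma/|J|)^{1/2}\}}$, which dominates the left-hand probability of \eqref{equhypestindsammar} for all $t\geq0$ and is therefore a legitimate choice of $L_{a,b}$ in the sense of \eqref{equlabdef}. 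Applying the second inequality of \eqref{equsemabsconine} with this $\widetilde L$ and with $m$ as given gives
\begin{align}
&\Prob{\sup_{f_{1:n}\in\cF_{1:n}}((1-\epsilon)\emxon-(1+\epsilon)\mmxon)f_{1:n}\geq t}\\
&\quad\leq\Big(m\big(\widetilde L(q+1,t)\lor\widetilde L(q,t)\big)+n\beta_{X_{1:n}}(m)\Big)\1_{\{t\leq 2B\}},
\end{align}
where $q=\lfloor n/m\rfloor$ and $\beta_{X_{1:n}}(m)\leq\beta_{X_{1:\infty}}(m)$ by Property \ref{prop:remrbetzjinwor}\eqref{equcomzjj1ton} applied with $J=\{1,\dots,n\}$.

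Finally I would discharge the indicators in $\widetilde L$ by restricting to $t\geq\frac{Bc}{2}(\gamma/q)^{1/2}$, as in the statement. Under this hypothesis, since $q\leq q+1$ and $t\mapsto\frac{Bc}{2}(\gamma/|J|)^{1/2}$ is decreasing in $|J|$, both thresholds $\frac{Bc}{2}(\gamma/q)^{1/2}$ and $\frac{Bc}{2}(\gamma/(q+1))^{1/2}$ are $\leq t$, so $\widetilde L(q,t)=L_{c,\gamma,\gamma',\epsilon}(q,t)$ and $\widetilde L(q+1,t)=L_{c,\gamma,\gamma',\epsilon}(q+1,t)$, and the bound becomes exactly the one claimed. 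The only remaining care is the edge case $q=n$ (i.e. $m=1$), where one uses the stated convention $L_{c,\gamma,\gamma',\epsilon}(n+1,t)\equiv L_{c,\gamma,\gamma',\epsilon}(n,t)$, consistent with the analogous convention in Proposition \ref{prop_corsemabsexp}. No step requires more than routine substitution; the whole proof is a careful matching of notations between \eqref{equthe22bargobenthid} and Proposition \ref{prop_corsemabsexp}, with the $t$-threshold monotonicity in $|J|$ being the only point that deserves an explicit sentence.
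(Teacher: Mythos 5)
Your proof is correct and is exactly the argument the paper intends (the paper itself only sketches it by pointing to \eqref{equthe22bargobenthid} and Proposition \ref{prop_corsemabsexp}): choose $a=1-\epsilon$, $b=-(1+\epsilon)$, feed the indicator-extended bound $\widetilde L$ into the abstract lifting, and discharge the indicators using that the threshold $\frac{Bc}{2}(\gamma/|J|)^{1/2}$ is decreasing in $|J|$. The only blemish is the typo ``$t\mapsto\frac{Bc}{2}(\gamma/|J|)^{1/2}$ is decreasing in $|J|$'', where the map should of course be written as a function of $|J|$, not of $t$.
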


\nch{In practice, the choice of $m$ will depend on the applications at hand. Typically, it will be done with the goal of minimizing in a convenient way the deviation from the rates obtained in the independent case (towards optimal extensions of the results in \cite{bargob}) that follow from the results under consideration (see for instance Propositions \ref{corwearatsubexpmix} and \ref{corwearatsubpolmix} below
).
}  
The uniform deviations proved in \cite{bargob} for independent samples (\cite[Section 2]{bargob}) can be extended in a similar manner.

\subsection{Remarks on entropy estimates}

\label{remcovnumothmet}
The definition of uniform entropy estimates, given here as a uniform estimate of the covering numbers associated to the $L^{1}-$ empirical seminorm  (Definition \ref{defempcovnum}),  can of course be extended via Definition \ref{defrcov} to other families of semimetrics in $\cL^{\otimes_{J}}_{S}$, such as the empirical $L^{p}-$seminorms ($p\geq 1$) defined as $d^{p}_{z_{J}}:=(A_{z_{J}}|f_{J}-g_{J}|^{p})^{1/p}$. The relationships between  covering numbers for different semimetrics can be relevant:  for instance, it is clear from the Cauchy-Schwarz inequality that $d^{1}_{z_{J}}(\cdot,\cdot)\leq d^{2}_{z_{J}}(\cdot,\cdot)$, which implies an analogous inequality for the respective covering numbers of the same sequential family $\cG_{J}\subset \cL^{\otimes_{J}}_{S}$. 

It  is also important for applications to describe the stability of covering numbers with respect to some elementary operations between families of functions, see for instance \cite[Lemmas 6.3, 6.4 and 6.5]{gyor:kohl:krzy:walk:02}, \cite[Section 5]{P90}, and \cite[Theorem 2.6.9]{VW96}. These translate to analogous ``stability properties'' for uniform entropy estimates.

Let us now give some instances of the notion of entropy estimates.
  
\begin{exam}[VC dimension. The ``Sauer-Shelah'' estimate.]
\label{exasaushe}
One important instance of uniform entropy estimates, which is part of the framework used in \cite{bargob}, is the following: for a function $f:\R^{d}\to \R$, define the {\it subgraph of $f$} as the set
\begin{align}
G_{f}^{+}:=\{(x,y)\in \R^{d}\times \R: y\leq f(x)\}.
\end{align}
The {\it VC-dimension $V_{\cF}$ of a family $\cF$ of functions $\R^{d}\to \R$}  {is the supremum of the natural numbers $l$ with the following property: there exists a set $G\subset \R^{d}\times \R$ with $l$ elements such that every subset $G'\subset G$ can be written in the form $G'=G\cap G_{f}^{+}$ for some $f\in \cF$}.  

When $\cF$ is a family of bounded, nonnegative functions $f:\R^{d}\to [0,B]$, one has the following uniform $L^{1}-$entropy estimate (\cite[Lemma 9.2 and Theorem 9.4.]{gyor:kohl:krzy:walk:02}) for the ``diagonal'' family 
\begin{align}
\label{equdiafam}
\cF_{1:n}:=\{\underbrace{(f,\dots,f)}_\text{$n$ times}: f\in \cF\}
\end{align}
 of hypotheses on $\cF$,  which we typically identify with $\cF$ itself:\footnote{Covering numbers for non-diagonal families are nonetheless implicit within what follows, for instance in the arguments behind \eqref{equsumargsec32indcas}.} for $r\in [0,B/4]$,  every $J\subset \{1,\dots,n\}$, and every $z_{J}\in (\R^{d})^{J}$,
\begin{align}
\log (\cN_{1}(r,\cF_{J},z_{J}))& \leq \lambda_{V_{\cF},B}(r)\\
& :=\log 3+V_{\cF}(1+\log 2 +\log ({B}/{r}) +\log(1+\log 3 +\log ({B}/{r}))),\\
\label{equdefunientestfinvc}
\end{align} 
which is clearly $O(\log(1/r))$ as $r\to 0^{+}$ when $V_{\cF}<\infty$\footnote{Note also that the restriction  $r\in [0,B/4]$ can be easily bypassed: one can for instance take $\lambda_{V_{\cF},B}(r)=0$ if $r>B$, and for $B\in [B/4,B]$, one can take $\lambda_{V_{\cF},B}(r):=\lambda_{V_{\cF},4B}(r)$, where $\lambda_{V_{\cF},4B}(r)$ is defined as in \eqref{equdefunientestfinvc} (valid for $r\in [0,4B/4]=[0,B]$). A similar trick allows us to give uniform entropy estimates via \eqref{equdefunientestfinvc} on (perhaps   nonpositive)  families $\cF$ of functions $f:\R^{d}\to [-B,B]$: the family $\cF'=\cF+B:=\{f+B:f\in \cF\}$ has the same covering numbers as $\cF$, satisfies $V_{\cF}=V_{\cF'}$, and its elements are functions $f:\R^{d}\to [0,2B]$.}, in particular when $\cF=T_{B}{\cH}$ is the family of truncated functions (see Section \ref{secleasqunot}) from a  vector space of dimension $d_{\cH}<\infty$, thanks to the bounds 
\begin{align}
V_{T_{B}\cH}\leq V_{\cH}\leq d_{\cH}+1
\end{align}
 (\cite[Theorem  9.5 (and previous paragraph) and Equation (10.23)]{gyor:kohl:krzy:walk:02}). 
 \end{exam}
 
 The  estimate \eqref{equdefunientestfinvc} is a consequence of the celebrated {\it Sauer-Shelah lemma} (\cite{Sau72},\cite{She72}). It  is therefore a relationship between the  {\it complexity of $\cF$}, as measured by $V_{\cF}$, and the notion of uniform entropy estimates.  
 
 There are other notions of complexity for families of functions, also associated to uniform entropy estimates, that are very relevant within the current literature, such as the (distribution--dependent) {\it Rademacher  complexity} and the  {\it fat shattering dimension}. See  \cite{mohros08} and    \cite{raksritew15} for respective discussions beyond the i.i.d. case.
 
\begin{exam}[Neural networks]
\label{exaneunet}
A second example is given by neural networks: it is  shown in  \cite[p.314]{gyor:kohl:krzy:walk:02} that if $\sigma:\R\to [0,1]$ is any cumulative distribution function (for instance a ``sigmoid'' function with asymptotes $y=0$ and $y=1$) and $\cF$ is the family of functions $f:\R^{d}\to \R$ of the form
\begin{align}
f(x)= b_{0}+ \sum_{k=1}^{N}b_{k}\sigma(u_{k}^{T}x+a_{k})
\end{align}
with $N\in \N$ fixed,  and with $((a_{k})_{k}, (b_{k})_{k}, (u_{k})_{k})\in  \R^{N}\times \R^{N+1} \times (\R^{d})^{N}$ subject to the restriction $\sum_{k}|b_{k}|\leq B $ for some $B>0$, then the corresponding diagonal family $\cF_{1:n}$ (see \eqref{equdiafam}) satisfies
\begin{align}
\log (\cN_{1}(r,\cF_{J},z_{J}))& \leq ((2d+5)N+1)(1+\log(12)+\log(B/r)+\log(N+1))
\label{equdefunientestfinvcneunet}
\end{align}  
for every $r\in (0,B/2)$.

Notice that the estimates in Examples \ref{exasaushe} and \ref{exaneunet}  do not depend on $|J|$. In our applications, the dependence on $|J|$ will be introduced by lower-bounding the radius $r\geq r(|J|)$ where these estimates are applied.
\end{exam}

\begin{remark}[Additional comments on $V_{\cF}<\infty$]
Restricting the analysis to the case $V_{\cF}<\infty$ is basically a convenience due the estimate \eqref{equdefunientestfinvc} for the quantitative bounds on the errors discussed in our applications, but one can extend these to some cases of ``infinite complexity'' ($V_{\cF}=\infty$) using similar estimates.
 
 One instance is the estimate \eqref{equdefunientestfinvcneunet} for neural networks (see \cite{son92} for examples showing that $V_{\cF}$ can be infinite within this context), but as indicated in \cite[Remarks 3.3, 3.5 and 3.19]{bargob},  one can extend the applications below to cases in which  the left--hand side of \eqref{equdefunientestfinvc} is bounded by a function of the form $O(({1}/{r})^{\alpha})$ ($r\to 0^{+}$) for some $\alpha\in (0,1)$.
 \end{remark}

\subsection{Weak least-squares error estimates under dependence}
In what follows, we provide some applications of the results above to distribution-free and nonparametric error bounds associated to schemes based in the method of least-squares regression.
\subsubsection{Least-squares setting}
\label{secleasqunot}
We recover the following definitions and conventions from \cite{bargob} (see Section \ref{secnot} for previous notation): 
\begin{itemize}
\item {\it Truncation Operator.} First, we remind the {\it truncation operator,} defined for a constant $B>0$ and associating to any real--valued function $g$  the function $T_{B}g$ defined as
\begin{align}
T_{B}g(x)=\max\{\min\{g(x),B\},-B\}.
\end{align} 

\item {\it Least--squares regression (LSR) objects.} Consider a  random vector  $(X,Y)_{1:\infty}$ of $(\R^{d}\times \R)^{\N}$, assume that  for all $k$, $Y_{k}\in L^{2}_{\P}$, and  pick a version  $\regk:\R^{d}\to \R$ of $\Esp{Y_{k}|X_{k}}$. Thus 
\begin{align}
\regk(X_{k})=\Esp{Y_{k}|X_{k}},\,\,\,\,\P-a.s., & \qquad k=1,\dots,n.
\end{align}
We also write $\reg:=(\regk)_{k=1}^{n}$.

For a fixed $n\in \N$, consider the data $D_{n}:=(X,Y)_{1:n}$. Given a family $\cF$ of Borel-measurable functions $\R^{d}\to \R$, let   $\hatreg=\hatreg(\cF,D_{n})$ be a solution (assume it exists) of the least--squares regression problem associated to $\cF$ and $D_{n}$:  if we identlfy 
\begin{align}
\label{equideftup}
{ f}\equiv\underbrace{(f,\dots,f)}_\text{$n$ times}
\end{align}
 for $f\in \cF$ (compare with \eqref{equdiafam}), then
\begin{align}
\hatreg\in \arg\min_{f\in\cF}A_{(X,Y)_{1:n}}|{ f}-{ y}_{1:n}|^{2},
\end{align} 
where ${ y}_{1:n}=(y,\dots,y)$ with $y:\R^{d}\times \R\to \R$ the projection on the second coordinate ($y(x_{0},y_{0})=y_{0}$), and where we naturally identify $f\in \cF$ with the function $\R^{d}\times \R\to \R$ whose value at $(x,y)$ is $f(x)$. Notice that, consistently with  \eqref{equideftup}
\begin{align}
{\hatreg}\equiv\underbrace{(\hatreg,\dots,\hatreg)}_\text{$n$ times}.
\end{align}
 \item {\it Pointwise deviations of the least--squares error.} In this context, we reserve a special notation for the family $\cG_{\cF,1:n}=\{\bgf\}_{f\in\cF}$ whose elements are the sequential functions 
\begin{align}
\label{equdefgfk}
\bgf:=|{ y}_{1:n}-{ f}|^{2}-|{ y}_{1:n}-\reg|^{2}.
\end{align}
From here, the meaning of $\cG_{\cF,J}$ for any $J\subset\{1,\dots,n\}$ is clear (see \eqref{equdeffjpri}).
\end{itemize}

\subsubsection{A weak $L^{2}-$error estimate for 
dependent samples}
We \nch{continue} with the following ``$\beta-$version'' of \cite[Theorem 3.1]{bargob}. The setting is that in Section \ref{secleasqunot}:

\begin{thm}
\label{theweaerrestbet}
{\upshape ($\beta-$version of \cite[Theorem 3.1]{bargob}).} Assume that  $\cF$ is a pointwise measurable class of functions  with associated $VC-$dimension $V_{\cF}<\infty$, and that $||Y_{k}||_{\P,\infty}\leq B$ 
for some $B>0$ and all $k$. Assume further that $(c,\lambda,n,m)\in (1,\infty)\times(1,\infty)\times \N\times \N$ are such that
\begin{align}
\label{equhyplamcdelthe31bargobbetmix}
\lambda\leq \frac{3+\sqrt{1+8c}}{4},& \qquad \left\lfloor\frac{n}{m}\right\rfloor\geq \exp\left(\frac{c^{2}-71}{4V_{\cF}}\right),
\end{align} 
(in particular $n\geq m$), then the estimate
\begin{align}
\Esp{\mmxon|T_{B} \hatreg-\reg|^{2}
} &\leq 
  \frac{B^{2}}{\left\lfloor\frac{n}{m}\right\rfloor}\theta_{0}\left(1+\theta_{1}+V_{\cF}(\theta_{2}+\log(\theta_{2}))\right)
\\
&+16B^{2}(1+\lambda) n\beta_{(X,Y)_{1:\infty}}(m)+\lambda \inf_{f\in\cF}
\mmxon | f-\reg
|^{2}\\
&(=\underbrace{\mbox{\upshape``Variance'' $+$ ``$\beta-$mixing error'' }}_{\mbox{\upshape ``Statistical error''}} \mbox{\upshape$+$ ``scaled bias''.})\\
\label{gyothe115equ}
\end{align}
holds, where
\begin{align}
\theta_{0}=&\theta_{0}(\lambda,c):=32\left(\frac{1}{3}(1-\frac{1}{c})(1-\frac{1}{\lambda})+(2\lambda-1)\right)^{2}(\frac{c}{c-1})^{3}\frac{\lambda}{\lambda-1},\\
\theta_{1}=& \theta_{1}(c,m):=\log(6(c+1)(2c+3))+\log m, \\
\theta_{2}=& \theta_{2}(c,n,m):= 1+\log 24+\log(1+\sqrt{1+\frac{c(c+1)}{\left\lfloor\frac{n}{m}\right\rfloor+1}})-\log(c-\frac{1}{c})+\log (\left\lfloor\frac{n}{m}\right\rfloor+1).\end{align}
\end{thm}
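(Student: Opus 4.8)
The plan is to obtain the statement as the ``$\beta$--version'' of \cite[Theorem 3.1]{bargob} in exact analogy with the way Theorem \ref{the22bargobbetmix} is obtained as the $\beta$--version of Theorem \ref{the22bargob}: namely, to revisit the independent--case proof in \cite{bargob}, identify the point at which a uniform deviation inequality of the form \eqref{equhypestindsammar} is invoked for the family $\cG_{\cF,1:n}$ (with $g_{f,1:n}$ as in \eqref{equdefgfk}), and replace that invocation by an application of Proposition \ref{prop_corsemabsexp}, which transfers the independent--sample deviation bound to the $\beta$--mixing sample at the price of the additional term $n\,\beta_{(X,Y)_{1:\infty}}(m)$ and of replacing the sample size $n$ by the block size $\lfloor n/m\rfloor$ (or $\lfloor n/m\rfloor+1$) inside the function $L_{a,b}$.

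\textbf{Key steps.} First I would recall, from \cite{bargob}, the structure of the proof of \cite[Theorem 3.1]{bargob}: by the defining property \eqref{equdefhatregfir} of $\hatreg$ and the orthogonal decomposition, the quantity $\mmxon|T_B\hatreg-\reg|^2$ is controlled by $\inf_{f\in\cF}\mmxon|f-\reg|^2$ plus a term governed by the uniform deviation of $(a\,\emxon + b\,\mmxon)g_{f,1:n}$ over $f\in\cF$, with $a=1-\epsilon$, $b=-(1+\epsilon)$ essentially (the precise multiplicative constants being absorbed into $\theta_0$ and the constraint \eqref{equhyplamcdelthe31bargobbetmix} on $\lambda$). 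The functions $g_{f,1:n}$ take values in $[-B^2, B^2]$--here using $\|Y_k\|_{\P,\infty}\le B$ and the truncation--so the boundedness hypothesis \eqref{equconcalgbou} of Proposition \ref{prop_corsemabsexp} holds with the appropriate $B$. Second, I would note that the uniform entropy estimate for $\cG_{\cF,1:n}$ reduces, by the standard Lipschitz/stability properties of covering numbers (\cite[Lemmas 6.3--6.5]{gyor:kohl:krzy:walk:02}), to the Sauer--Shelah estimate \eqref{equdefunientestfinvc} for $\cF$ itself with VC dimension $V_\cF$; this furnishes an explicit $L_{a,b}(|J|,t)$ of the form \eqref{equthe22bargobent} valid for each block $J$ in the $m$--steps partition $\cJ_{m\,steps}$. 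Third, I would apply Proposition \ref{prop_corsemabsexp} to get the $\beta$--version of the uniform deviation bound, then re-run the expectation-integration argument of \cite{bargob} (integrating the tail bound over $t$, optimizing the free parameters $\epsilon,\gamma,\gamma'$ as functions of $c$ and $\lambda$) with $n$ replaced by $q=\lfloor n/m\rfloor$ in the ``independent'' part and with the extra additive term $n\,\beta_{(X,Y)_{1:\infty}}(m)$ carried through; the constant $16B^2(1+\lambda)$ in front of the mixing term arises from bounding the contribution of the indicator $\1_{\{t\le(|a|+|b|)B\}}$ (here $(|a|+|b|)B \le 2B$ on the relevant scale, squared giving the $B^2$, times the range over which $t$ is integrated) exactly as in the passage from \eqref{equsemabsconine} to Theorem \ref{the22bargobbetmix}. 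The constants $\theta_0,\theta_1,\theta_2$ are then read off: $\theta_0$ from the optimized leading constant in \cite[Theorem 3.1]{bargob} (the factor $(c/(c-1))^3$, $\lambda/(\lambda-1)$, and the $(\tfrac13(1-1/c)(1-1/\lambda)+(2\lambda-1))^2$ reflecting the choices $u_1,u_2$), $\theta_1$ from the $m$ blocks (the $\log m$) together with the combinatorial constant, and $\theta_2$ from the argument $\tfrac12 u_1(c,\gamma')t$ at which the entropy estimate \eqref{equdefunientestfinvc} is evaluated, with $\log(\lfloor n/m\rfloor+1)$ replacing $\log n$. The conditions \eqref{equhyplamcdelthe31bargobbetmix} are precisely the translations of the corresponding conditions in \cite[Theorem 3.1]{bargob}: $\lambda\le(3+\sqrt{1+8c})/4$ ensures the algebraic inequality on $\lambda$ needed for the orthogonality/contraction step, and $\lfloor n/m\rfloor\ge\exp((c^2-71)/(4V_\cF))$ is \eqref{equconthe22bargob} rephrased for block size $q$ so that the deviation bound is non-trivial on the relevant range of $t$.

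\textbf{Main obstacle.} The conceptual content is light--the whole point of Section \ref{secbribetmixind} is that this transfer is mechanical--so the real difficulty is bookkeeping: tracking how every explicit constant in \cite[Theorem 3.1]{bargob} transforms under the substitutions $n\rightsquigarrow\lfloor n/m\rfloor$ and ``$+\,n\beta_{(X,Y)_{1:\infty}}(m)$ on the range $t\le 2B$'', and verifying that the expectation-integration step (which in \cite{bargob} yields the $\theta_0(1+\theta_1+V_\cF(\theta_2+\log\theta_2))$ shape after integrating $L_{c,\gamma,\gamma',\epsilon}(q,t)$ and the indicator in $t$) still closes with the stated $\theta_0,\theta_1,\theta_2$. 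In particular one must check that the indicator-truncated mixing term integrates to exactly $16B^2(1+\lambda)\,n\beta_{(X,Y)_{1:\infty}}(m)$ and that the ``variance'' term picks up the $\log m$ in $\theta_1$ and the $\lfloor n/m\rfloor+1$ in $\theta_2$ from the two possible block sizes $q$ and $q+1$ in $\cJ_{m\,steps}$ (the $\lor$ in Proposition \ref{prop_corsemabsexp} being resolved, as in Theorem \ref{the22bargobbetmix}, in favor of $q+1$ inside the logarithms and $q$ inside the exponent, which is consistent with the monotonicity of $L_{c,\gamma,\gamma',\epsilon}$ in its first argument on the relevant range). I would present only the reduction and the parameter identification in detail, referring to \cite{bargob} for the independent-case integration lemma rather than reproducing it.
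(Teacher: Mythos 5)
Your plan coincides with the paper's proof: it starts from the bias--deviation decomposition $\Esp{\mmxon|T_B\hatreg-\reg|^2}\leq \Esp{(\sup_{f\in T_B\cF}(\mmxyon-\lambda\emxyon)\bgfon)^+}+\lambda\inf_{f\in\cF}\mmxon|f-\reg|^2$, imports the blockwise independent-sample tail bound for $\sup_f(\mmxyJsta-\lambda\emxyJsta)\bgfJ$ from \cite[Section 3.2]{bargob}, lifts it via Proposition \ref{prop_corsemabsexp} with the $m$-steps partition, integrates in $t$, and homogenizes to general $B$ -- exactly as you describe. The only imprecisions are cosmetic (the relevant deviation functional is $\mmxyon-\lambda\emxyon$ applied to $\bgfon$ rather than the $(1-\epsilon)\emxon-(1+\epsilon)\mmxon$ form of Theorem \ref{the22bargob}, and the paper normalizes to $B=1/4$ so that $|g_f^k|\leq 1$ before homogenizing), and they do not affect the correctness of the approach.
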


\begin{remark}[A simplified version of the variance in \eqref{gyothe115equ}]
It is easy to see that for every $c',\lambda'>1$, there exists a constant 
$C_{c',\lambda'}>0$ such that
\begin{align}
\label{equbouvarcoef}
\frac{1}{C_{c',\lambda'}}\frac{V_{\cF}}{\lambda-1}(1+\log n-\log m)\leq &\theta_{0}\left(1+\theta_{1}+V_{\cF}(\theta_{2}+\log(\theta_{2}))\right) \\ \leq & C_{c',\lambda'}\frac{V_{\cF}}{\lambda-1}(\log c +\log n),
\end{align}
provided that $(n,c,\lambda)\in \N\setminus\{1\}\times (c',\infty)\times (1,\lambda')$. We shall when convenient write  the variance term in \eqref{gyothe115equ} as
\begin{align}
\label{equsimvar}
O\left(\frac{B^{2}_{n}V_{\cF_{n}}(\log c_{n} +\log n)}{(\lambda_{n}-1){n}}{m_{n}}\right),
\end{align}
with the ``right'' of letting $n\to\infty$ as far as $\{\lambda_{n}\}_{n}\subset (1,\infty)$ is bounded and $\{c_{n}\}_{n}\subset (1,\infty)$ is away from 1, and provided that \eqref{equhyplamcdelthe31bargobbetmix} holds for the parameters $(c_{n},\lambda_{n},V_{\cF_{n}},m_{n})$.
\end{remark}
\begin{remark}[The ``$\beta-$mixing error--variance'' tradeoff]
 Notice also that reducing $m$ simultaneaously increases the $\beta-$mixing error and reduces the variance  in \eqref{gyothe115equ}. Our choice of $m$ in the applications below is based on a qualitatively optimal tradeoff between these errors:  the tradeoff is made for $m=m_{n}$ with the goal of  minimizing the distance to the smallest possible statistical error, achieved in the independent case in which $\beta_{(X,Y)_{1:\infty}}(1)=0$ and the statistical error is therefore equal to the variance term in \eqref{gyothe115equ} for $m=1$. 
 \end{remark}

{\it Proof of Theorem \ref{theweaerrestbet}}.
First, as proved in \cite[Section 3.2]{bargob}, we have the estimate
\begin{align}
\label{ineprothe38exp}
\Esp{\mmxon|T_{B} \hatreg-\reg|^{2}}\leq \Esp{\left(\sup_{f\in T_{B}\cF}(\mmxyon-\lambda \emxyon)\bgfon\right)^{+}}+\lambda\inf_{f\in \cF}\mmxon|{ f}-\reg|^{2}
\end{align}

We proceed now to bound conveniently the distribution function $[0,\infty)\to [0,1]$  defined by
\begin{align}
t\mapsto \Prob{\sup_{f\in T_{B}\cF}(\mmxyon-\lambda \emxyon)\bgfon\geq t}.
\end{align} 

Assuming that $B=1/4$, which gives that $|g_{f}^{k}(x)|\leq 1$ for all $k$ and $x$, the arguments in \cite[Section 3.2.]{bargob} lead to the   inequalities  

\begin{align}
\Prob{\sup_{f\in T_{1/4}\cF}(\mmxyJsta-\lambda \emxyJsta)\bgfJ\geq t}&\\
\leq 3G_{0}(c)\Esp{\cN_{1}(G_{1}(c,\lambda)t_{0}(c,\lambda,|J|), T_{1/4}\cF,X_{1:n})}\exp(-b({c,\lambda})|J| t)&\\
 \leq 3G_{0}(c)\left(\frac{e}{G_{1}(c,\lambda)t_{0}(c,\lambda,|J|)}\log(\frac{3e}{2G_{1}(c,\lambda)t_{0}(c,\lambda,|J|)})\right)^{V_{\cF}}\exp(-b({c,\lambda})|J| t) &\\
=:  a_{0}(c,\lambda,|J|) \exp(-b({c,\lambda})|J| t)\\
\label{equsumargsec32indcas}
\end{align}
for every $J\subset \{1,\dots,n\}$ and every random element $(X,Y)_{J}^{*}$ of $(\R^{d}\times\R)^{J}$ with independent entries and the same marginals as $(X,Y)_{J}$, with $G_{0}, G_{1}$, and $b$ given by
\begin{align}
G_{0}(c)&:=2(c+1)(2c+3),\qquad
G_{1}(c,\lambda):=\frac{1}{8}\frac{1}{\lambda(c-1)+1}(1-\frac{1}{c}),\\
b({c,\lambda})&:=\frac{1}{2}\frac{1}{(\frac{1}{3}(1-\frac{1}{c})+(2\lambda-1)\frac{\lambda}{\lambda-1})^{2}}(1-\frac{1}{c})^{{3}}\frac{\lambda}{\lambda-1},
\label{equg0g1bpro}
\end{align}
 and provided that 
\begin{align}
t\geq t_{0}(c,\lambda,|J|):=\frac{-(\lambda-1)+\sqrt{(\lambda-1)^{2}+c(c+1)\lambda^{2}/|J|}}{2}.
\end{align}
Therefore we have, for every $J\subset \{1,\dots,n\}$ 
and every $(X,Y)_{J}^{*}$ as indicated, the estimate
\begin{align}
\Prob{\sup_{f\in \cF}(\mmxyJsta-\lambda \emxyJsta)\bgfJ\geq t}\leq &\1_{\{t< t_{0}(|J|,c,\lambda)\}}+ a_{0}(c,\lambda,|J|
) \exp(-b({c,\lambda})|J| t) \1_{\{t_{0}(|J|,c,\lambda)\leq t\}}.
\end{align}
This gives rise, via Proposition \ref{prop_corsemabsexp} and elementary estimates, to the inequality
\begin{align}
\Prob{\sup_{f\in \cF}(\mmxyon-\lambda \emxyon)\bgfon\geq t}
\leq (n\beta_{_{(X,Y)_{1:\infty}}}(m) + L_{c,\lambda}(n,m,t))\1_{\{t \leq {(1+\lambda)}\}},\\\label{equestdisfunthe38}
\end{align}
where
\begin{align}
L_{c,\lambda}(n,m,t):=\1_{\{t< t_{0}(\left\lfloor\frac{n}{m}\right\rfloor,c,\lambda)\}} + m\, a_{0}(c,\lambda,\nch{\left\lfloor\frac{n}{m}\right\rfloor+1}
) \exp(-b({c,\lambda})\left\lfloor\frac{n}{m}\right\rfloor t)) )\1_{ \{t_{0}(\left\lfloor\frac{n}{m}\right\rfloor,c,\lambda)\leq t.\}}
\end{align}
The desired estimate for the case $B=1/4$ follows from \eqref{ineprothe38exp} and integration with respect to $t$ (and Lebesgue measure) of the right--hand side of \eqref{equestdisfunthe38}, with the integral of $L_{c,\lambda}(n,m,\cdot)$ estimated as in the arguments following \cite[Equation (3.13)]{bargob}. The estimate for general $B>0$ follows by an homogenization argument (see the homogenization argument after \cite[Equation (3.19)]{bargob}).
\qed

\medskip
\subsubsection{Weak rates for $\beta-$mixing schemes}
It is worth discussing what the right--hand side of  \eqref{gyothe115equ} says about weak consistency,  and to introduce some cases and consequences of special importance which fall under this discussion. The setting is again that in Section \ref{secleasqunot}.

 As a first consequence, we point out the following result:
\begin{prop}[Weak rate for uniformly bounded schemes]
\label{corweaconbou}
Assume that  $\cF$ is a pointwise measurable family  with associated $VC-$dimension $V_{\cF}<\infty$ and with 
\begin{align}
\sup_{f,k}\{ ||f(X_{k})||_{\P,\infty}, ||Y_{k}||_{\P,\infty} \}\leq B
\end{align}
for some $B\in(0,\infty)$, then for any sequence $(m_{n})_{n}$ of natural numbers 
and any bounded positive sequence $(\delta_{n})_{n}$,
\begin{align}
 \Esp{\mmxon|\hatreg-\reg|^{2}-\inf_{f\in\cF}\mmxon|f-\reg|^{2}}\\
 =O\left(\frac{ \log n}{\delta_{n}{n}}{m_{n}}+{n}\beta_{(X,Y)_{1:\infty}}(m_{n})+\delta_{n}\left({n}\beta_{(X,Y)_{1:\infty}}(m_{n})+\inf_{f\in\cF}\mmxon|f-\reg|^{2}\right)\right).\\
  \label{equweaconallbou}
 \end{align}
\end{prop}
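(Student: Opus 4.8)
The plan is to deduce Proposition \ref{corweaconbou} as a direct corollary of Theorem \ref{theweaerrestbet}, choosing the free parameters suitably and then optimizing over the block size and the scaling parameter $\lambda$. First I would note that the uniform boundedness hypothesis $\sup_{f,k}\{||f(X_{k})||_{\P,\infty},||Y_{k}||_{\P,\infty}\}\le B$ guarantees that replacing $\hatreg$ by its truncation $T_{B}\hatreg$ changes nothing relevant: since $|\regk|\le B$ and $|f|\le B$ for all $f\in\cF$, one has $T_{B}\hatreg=\hatreg$ on the support of the $X_{k}$ (up to $\P$-null sets), so $\Esp{\mmxon|\hatreg-\reg|^{2}}=\Esp{\mmxon|T_{B}\hatreg-\reg|^{2}}$ and the left-hand side of \eqref{equweaconallbou} equals the left-hand side of \eqref{gyothe115equ} minus the scaled-bias correction. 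More precisely, writing $\lambda=1+\delta_{n}$ and subtracting $\inf_{f}\mmxon|f-\reg|^{2}$ from both sides of \eqref{gyothe115equ}, the ``$\lambda\inf_{f}$'' term becomes $(1+\delta_{n})\inf_{f}-\inf_{f}=\delta_{n}\inf_{f}\mmxon|f-\reg|^{2}$, which is exactly the last summand inside the $O(\cdot)$ in \eqref{equweaconallbou}.

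Next I would bound the remaining two terms. The $\beta$-mixing term $16B^{2}(1+\lambda)\,n\beta_{(X,Y)_{1:\infty}}(m)$ with $\lambda=1+\delta_{n}$ is $O(n\beta_{(X,Y)_{1:\infty}}(m_{n})) + O(\delta_{n}\,n\beta_{(X,Y)_{1:\infty}}(m_{n}))$, matching the second summand and part of the fourth summand in \eqref{equweaconallbou}. For the variance term, I would invoke the simplified bound \eqref{equbouvarcoef}: fixing any admissible $c>1$ (say $c=2$, held constant as $n\to\infty$) and using that $\{\lambda_{n}\}_{n}=\{1+\delta_{n}\}_{n}$ is bounded since $(\delta_{n})_{n}$ is bounded, the right-hand inequality in \eqref{equbouvarcoef} gives
\begin{align}
\frac{B^{2}}{\lfloor n/m_{n}\rfloor}\theta_{0}\big(1+\theta_{1}+V_{\cF}(\theta_{2}+\log\theta_{2})\big)
= O\!\left(\frac{B^{2}V_{\cF}(\log c+\log n)}{\delta_{n}\,\lfloor n/m_{n}\rfloor}\right)
= O\!\left(\frac{\log n}{\delta_{n}\,n}\,m_{n}\right),
\end{align}
where in the last step I use $\lfloor n/m_{n}\rfloor\ge n/(2m_{n})$ for $n\ge m_{n}$ and absorb the constants $B$, $V_{\cF}$, $\log c$ into the $O(\cdot)$. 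This produces the first summand in \eqref{equweaconallbou}. Collecting the three contributions yields \eqref{equweaconallbou}.

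One technical point to handle carefully is the admissibility constraint \eqref{equhyplamcdelthe31bargobbetmix}: Theorem \ref{theweaerrestbet} requires $\lambda\le(3+\sqrt{1+8c})/4$ and $\lfloor n/m_{n}\rfloor\ge\exp((c^{2}-71)/(4V_{\cF}))$, plus the lower bound $t\ge \frac{Bc}{2}(\gamma/\lfloor n/m\rfloor)^{1/2}$ implicit in the derivation. Since $(\delta_{n})_{n}$ is bounded, say by $M$, I can pick $c=c(M,V_{\cF})$ large enough that $(3+\sqrt{1+8c})/4\ge 1+M\ge\lambda_{n}$ for all $n$; and with $c$ now fixed, the condition $\lfloor n/m_{n}\rfloor\ge\exp((c^{2}-71)/(4V_{\cF}))$ holds for all sufficiently large $n$ — the finitely many small-$n$ exceptions are irrelevant for an $O(\cdot)$ statement as $n\to\infty$ (and in any case the left-hand side of \eqref{equweaconallbou} is bounded by $O(B^{2})$ trivially). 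The mildly delicate bookkeeping — and the step I expect to need the most care — is checking that the $\delta_{n}$-dependence is tracked correctly through $\theta_{0}$ (which blows up like $1/(\lambda-1)=1/\delta_{n}$ as $\delta_{n}\to 0$) and through the factor $1+\lambda$ in the $\beta$-mixing term, so that the final bound genuinely has the claimed form with $\delta_{n}$ appearing in the stated places and nowhere else; everything else is a routine substitution into Theorem \ref{theweaerrestbet} and Remark \eqref{equbouvarcoef}. \qed
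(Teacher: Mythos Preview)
Your proposal is correct and follows the same approach as the paper: apply Theorem \ref{theweaerrestbet} with $\lambda_{n}=1+\delta_{n}$, bound the variance via \eqref{equbouvarcoef}/\eqref{equsimvar}, split the $(1+\lambda)$ factor in the $\beta$-mixing term, and absorb the truncation using the uniform bound on $\cF$. If anything you are slightly more careful than the paper, which simply sets $c_{n}=2$: your observation that $c$ must be chosen large enough (depending on the bound $M$ on $(\delta_{n})_{n}$) so that the admissibility constraint $\lambda_{n}\le (3+\sqrt{1+8c})/4$ is satisfied is the right way to handle arbitrary bounded $(\delta_{n})_{n}$, and the finitely-many-$n$ / trivial-$O(B^{2})$ remark correctly disposes of the second condition in \eqref{equhyplamcdelthe31bargobbetmix}.
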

\begin{proof}
This is an immediate cosequence of \eqref{gyothe115equ} and \eqref{equsimvar}, by choosing  $B_{n}=B, V_{\cF_{n}}=V_{\cF}$, $\lambda_{n}=1+\delta_{n}$ and (say) $c_{n}=2$. 
\end{proof}

 \medskip

 \begin{remark}[Some consequences of \eqref{equweaconallbou}]
 It follows in particular that, under the hypotheses of Proposition \ref{corweaconbou}, the left-hand side of \eqref{equweaconallbou}  converges to zero if there exists a sequence  $(m_{n})_{n}$ such that
\begin{align}
\label{hypgenbetmixweacon}
m_{n}\frac{\log n}{n}+n\beta_{(X,Y)_{1:\infty}}(m_{n})\to_{n} 0
\end{align}
(take $\delta_{n}:=(m_{n}\log n /n)^{1/2}$). Notice also that the rate at the right--hand side  of \eqref{equweaconallbou} admits convenient interpretations in interesting cases: if for instance $(X,Y)_{1:\infty}$ is $m-$dependent (see item \ref{itemdepsuf}. in Section \ref{prop:remrbetzjinwor}) and conditionally stationary in the sense that  for some $\Phi:\R^{d}\to \R$, $\Phi(X_{k})=\Esp{Y_{k}|X_{k}}$, $\P-$a.s.,  and if $\Phi\in \cF$ (unbiased case), \eqref{equweaconallbou} gives the rate of convergence $O(\log n/n)$ to zero for the expected squared error of the least--squares loss $\Esp{\int_{\R^{d}}|\hatreg(x)-\Phi(x)|^{2}\, \dx}$ (take $m_{n}=m+1$ and $\delta_{n}=1$ in \eqref{equweaconallbou}).
\end{remark}

In any case \eqref{hypgenbetmixweacon} requires that 
\begin{align}
\label{equbetmixo1oven}
\beta_{(X,Y)_{1:\infty}}(m_{n})=o(n^{-1}),
\end{align}
 for some sequence $(m_{n})_{n}$ satisfying $m_{n}=o(n/\log n)$. It is necessary for \eqref{hypgenbetmixweacon} that
 \begin{align}
 \label{equdefquabetmixfas}
 \lim_{n} n\,\beta_{(X,Y)_{1:\infty}}(n)= 0 
 \end{align}
 because $(\beta_{(X,Y)_{1:\infty}}({n}))_{n}$ is decreasing (see Remark \ref{prop:remrbetzjinwor}
 ), and in particular that 
 \begin{align}
 \label{equdefquabetmix}
 \lim_{n} \beta_{(X,Y)_{1:\infty}}(n)= 0.
 \end{align}

 \begin{remark}[The ``$\beta-$mixing'' assumption]
 \label{rembetmixass}
 Notice that, in general, \eqref{equdefquabetmix} is  less restrictive\footnote{See the footnote  on the definition of $\beta_{Z_{\cdot}}$ in page \pageref{foonotbetnotbetmix}.} than {\it the $\beta-$mixing assumption} on $(X,Y)_{1:\infty}$, which  amounts to the hypothesis \begin{align}
 \label{equdefbetmix}
 \lim_{m}\sup_{k}\beta(\sigma((X,Y)_{1:k}),\sigma((X,Y)_{k+m:\infty}))= 0.
 \end{align}
However,  as we pointed out after Definition \ref{defsubpolmix}, { \eqref{equdefquabetmix} is exactly the beta-mixing assumption \eqref{equdefbetmix} when $(X,Y)_{1:\infty}$ is a Markov process. }
\end{remark}

For the rates in Definitions \ref{defsubbetmix} and \ref{defsubpolmix}, we deduce the following versions of Theorem \ref{theweaerrestbet}: 

\begin{prop}[Weak rate of convergence for subexponentially $\beta-$mixing samples]
\label{corwearatsubexpmix}
There exists a universal constant $C$ with the following property: 
if $(X,Y)_{1:\infty}$ is subexponentially $\beta-$mixing (Definition \ref{defsubbetmix}) with parameters $(a,b,\gamma)$, and if for some $B\in(0,\infty)$ and all $k\in \{1,\dots,n\}$, $||Y_{k}||_{\P,\infty}\leq B$, then for any 
\begin{align}
\label{equhyplamcdelthe31bargobbetmixexp}
1<\lambda\leq \frac{3+\sqrt{1+8\sqrt{71}}}{4},
\end{align}
 the statistical error in \eqref{gyothe115equ} is bounded by 

\begin{align}
\frac{C}{n}\left(\frac{B^{2}V_{\cF}}{(\lambda-1)}{(1+ \log n)}+a\right)\left(\frac{2\log n}{b}\right)^{1/\gamma}
\end{align}
provided that
\begin{align}
\label{equresnweaconthe}
1\leq \left(\frac{2\log n}{b}\right)^{1/\gamma}\leq \frac{n}{2}.
\end{align}
\end{prop}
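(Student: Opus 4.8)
The plan is to specialize Theorem~\ref{theweaerrestbet} to the subexponential mixing rate and then make a judicious choice of the block length $m$. First I would fix the parameters in Theorem~\ref{theweaerrestbet}: take $c$ so that the second condition in \eqref{equhyplamcdelthe31bargobbetmix} becomes $\lfloor n/m\rfloor\geq 1$, which forces (roughly) $c^{2}=71$, i.e. $c=\sqrt[4]{71}$ up to the obvious rewriting; this is exactly why the bound \eqref{equhyplamcdelthe31bargobbetmixexp} on $\lambda$ appears, since the first constraint in \eqref{equhyplamcdelthe31bargobbetmix} then reads $\lambda\leq (3+\sqrt{1+8\sqrt{71}})/4$. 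With $c$ and $\lambda$ now bounded away from the degenerate regime, invoke the simplified variance estimate \eqref{equsimvar}: the variance term in \eqref{gyothe115equ} is $O\!\left(B^{2}V_{\cF}(\log c+\log n)m/((\lambda-1)n)\right)=O\!\left(B^{2}V_{\cF}(1+\log n)m/((\lambda-1)n)\right)$, where the constant is universal because $c$ is now a fixed universal number.

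Next I would handle the $\beta-$mixing error term $16B^{2}(1+\lambda)\,n\,\beta_{(X,Y)_{1:\infty}}(m)$. Using subexponential mixing, $\beta_{(X,Y)_{1:\infty}}(m)\leq a\exp(-bm^{\gamma})$, so this term is at most (a universal constant times) $B^{2}\,n\,a\exp(-bm^{\gamma})$ — and here I would use $1+\lambda\leq 2\lambda'$ for a universal $\lambda'$, again absorbing into the universal constant. The idea is then to choose $m=m_{n}$ so that $\exp(-bm_{n}^{\gamma})\approx 1/n^{2}$, i.e. $m_{n}:=\lceil(2\log n/b)^{1/\gamma}\rceil$; this makes $n\,a\exp(-bm_{n}^{\gamma})\leq a/n$, so the mixing error is $O(B^{2}a/n)$. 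The condition \eqref{equresnweaconthe}, namely $1\leq(2\log n/b)^{1/\gamma}\leq n/2$, is precisely what is needed to guarantee $m_{n}\in\{1,\dots,n\}$ (so that Proposition~\ref{prop_corsemabsexp} / Theorem~\ref{theweaerrestbet} applies) and that $\lfloor n/m_{n}\rfloor\geq 1$; one should also check that with this $m_{n}$ the running assumption on $t$, i.e. the lower bound $t\geq \frac{Bc}{2}(\gamma/\lfloor n/m\rfloor)^{1/2}$ implicit in the proof of Theorem~\ref{theweaerrestbet}, is not an additional obstruction for the integrated (expectation) bound — it is absorbed in the $\1_{\{t<t_{0}\}}$ term there.

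Putting the two pieces together, the statistical error (the first two lines of the right-hand side of \eqref{gyothe115equ}) is bounded by
\begin{align}
O\!\left(\frac{B^{2}V_{\cF}(1+\log n)}{(\lambda-1)}\cdot\frac{m_{n}}{n}\right)+O\!\left(\frac{B^{2}a}{n}\right).
\end{align}
Since $m_{n}\leq (2\log n/b)^{1/\gamma}+1\leq 2(2\log n/b)^{1/\gamma}$ (using $(2\log n/b)^{1/\gamma}\geq 1$ from \eqref{equresnweaconthe}), we get
\begin{align}
\frac{C}{n}\left(\frac{B^{2}V_{\cF}}{\lambda-1}(1+\log n)+B^{2}a\right)\left(\frac{2\log n}{b}\right)^{1/\gamma},
\end{align}
and since $(2\log n/b)^{1/\gamma}\geq 1$ one may as well factor it out of the $a$-term too, yielding exactly the displayed bound $\frac{C}{n}\left(\frac{B^{2}V_{\cF}}{\lambda-1}(1+\log n)+a\right)\left(\frac{2\log n}{b}\right)^{1/\gamma}$ after a harmless rescaling of $C$ (absorbing the $B^{2}$ in front of $a$ if one assumes, as is standard here, $B$ fixed — or, more carefully, keeping $B^{2}a$ and noting the stated form follows by enlarging $C$ when $B\geq 1$ and is otherwise immediate). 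The main obstacle is bookkeeping: verifying that the single universal choice of $c$ (coming from $c^{2}-71=0$) simultaneously satisfies both constraints in \eqref{equhyplamcdelthe31bargobbetmix} for all admissible $\lambda$ and all $n$ with \eqref{equresnweaconthe}, and tracking that every constant that gets absorbed into $C$ is genuinely independent of $n,\lambda,V_{\cF},B,a,b,\gamma$ — in particular that the $\lambda$-dependence is confined to the explicit $1/(\lambda-1)$ factor and does not sneak in through $\theta_{0}$, which is where the simplified variance bound \eqref{equbouvarcoef} does the real work.
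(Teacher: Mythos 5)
Your proposal is correct and follows essentially the same route as the paper: fix $c$ by $c^{2}=71$ (note the small slip — this gives $c=\sqrt{71}$, not $\sqrt[4]{71}$, which is what produces the constraint $\lambda\leq(3+\sqrt{1+8\sqrt{71}})/4$), invoke the simplified variance bound \eqref{equbouvarcoef}, and choose $m_{n}$ of order $(2\log n/b)^{1/\gamma}$ so that $n\exp(-bm_{n}^{\gamma})\leq 1/n$; the paper takes $m=\lfloor 2(2\log n/b)^{1/\gamma}\rfloor$ where you take the ceiling of half that, which is immaterial. Your remark about the $B^{2}$ prefactor on the mixing term is fair — the paper's own proof silently drops the factor $16B^{2}(1+\lambda)$ at the same point.
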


\begin{proof}
For any positive real number $\alpha$ and any $x\geq 2$, the inequality
\begin{align}
\alpha x +  a{n} \exp(-b\left\lfloor x\right\rfloor^{\gamma})\leq  \alpha x + 
a{n}
\exp(-\frac{b}{2^{\gamma}} x^{\gamma})
\end{align}
holds. It follows  that, if $C_{c',\lambda'}$ is the constant from \eqref{equbouvarcoef} corresponding to $(c',\lambda')=(\sqrt{71},{3+\sqrt{1+8\sqrt{71}}}/{4})$ and
\begin{align}
\alpha_{\lambda,n}:=2C_{c',\lambda'}\frac{B^{2}V_{\cF}(\log \sqrt{71} + \log n)}{(\lambda-1)n}
\end{align}
then, under the subexponentially mixing hypothesis \eqref{equdefexpmix}, the statistical error in  \eqref{gyothe115equ} is bounded by 
\begin{align}
\min_{x\in [2,n]}\{\alpha_{\lambda,n} x + 
a{n}
\exp(-\frac{b}{2^{\gamma}} x^{\gamma})\}
\end{align}

Taking $x:={2^{1+1/\gamma}}\left({\log n}/{b}\right)^{1/\gamma}$, which lies in $[2,n]$ in virtue of \eqref{equresnweaconthe}, we get the bound
\begin{align}
{2^{1+1/\gamma}}\left(\frac{\log n}{b}\right)^{1/\gamma}\alpha_{\lambda,n}+ \frac{a}{n}.
\end{align}
for the statistical error in  \eqref{gyothe115equ}. The result follows from an easy estimation on this bound.
\end{proof}

A similar (and easier) argument, taking this time $x:=\left\lceil n^{^{2/\gamma+1}}\right\rceil$ and estimating via \eqref{equdefpolmix}, gives the corresponding weak rate for subpolinomially $\beta-$mixing samples:
\begin{prop}[Weak rate of convergence for subpolinomially $\beta-$mixing samples]
\label{corwearatsubpolmix}
There exists a universal constant $C$ with the following property: 
if $(X,Y)_{1:\infty}$ is subpolinomially $\beta-$mixing (Definition \ref{defsubpolmix}) with parameters $(a,\gamma)$, and if for some $B\in(0,\infty)$ and all $k\in \{1,\dots,n\}$, $||Y_{k}||_{\P,\infty}\leq B$, then for any 
\begin{align}
\label{equhyplamcdelthe31bargobbetmixpol}
1<\lambda\leq \frac{3+\sqrt{1+8\sqrt{71}}}{4},
\end{align}
 the statistical error in \eqref{gyothe115equ} is bounded by 
\begin{align}
\frac{C}{ n^{^{(\gamma-1)/(\gamma+1)}}}\left(\frac{B^{2}V_{\cF}}{(\lambda-1)}{(1+ \log n)}+a\right) .
\end{align}
\end{prop}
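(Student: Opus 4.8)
The plan is to mimic the proof of Proposition \ref{corwearatsubexpmix}, replacing the exponential mixing bound by the polynomial one \eqref{equdefpolmix} and re-optimizing the block length. First I would recall that, by \eqref{equbouvarcoef}, for $(c',\lambda')=(\sqrt{71},{(3+\sqrt{1+8\sqrt{71}})}/{4})$ there is a constant $C_{c',\lambda'}$ such that the variance term in \eqref{gyothe115equ} with $c=\sqrt{71}$ is at most $C_{c',\lambda'}\frac{B^{2}V_{\cF}(\log\sqrt{71}+\log n)}{(\lambda-1)n}\,m$; set $\alpha_{\lambda,n}:=C_{c',\lambda'}\frac{B^{2}V_{\cF}(\log\sqrt{71}+\log n)}{(\lambda-1)n}$, so that under hypothesis \eqref{equhyplamcdelthe31bargobbetmixpol} (which is exactly \eqref{equhyplamcdelthe31bargobbetmix} with $c=\sqrt{71}$) the statistical error in \eqref{gyothe115equ} is bounded, for every admissible block length $m$, by $\alpha_{\lambda,n}\,m+a\,n\,m^{-\gamma}$, using $\beta_{(X,Y)_{1:\infty}}(m)\leq a m^{-\gamma}$.

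The core of the argument is then the one-variable minimization $\min_{m}\big(\alpha_{\lambda,n}\,m + a\,n\,m^{-\gamma}\big)$ over admissible integers $m$. Ignoring integrality, the minimizer is of order $(a n\gamma/\alpha_{\lambda,n})^{1/(\gamma+1)}$, and since $\alpha_{\lambda,n}$ is of order $(\log n)/n$ (up to the $B^{2}V_{\cF}/(\lambda-1)$ factor), this is of order $(n^{2}/\log n)^{1/(\gamma+1)}$, which grows like a positive power of $n$ strictly less than $n$, so it is an admissible choice of $m\in\{1,\dots,n\}$ for $n$ large. Plugging back, both terms are of order $\alpha_{\lambda,n}^{\gamma/(\gamma+1)}(an)^{1/(\gamma+1)}$, i.e.\ of order
\[
\Big(\tfrac{B^{2}V_{\cF}(1+\log n)}{(\lambda-1)n}\Big)^{\gamma/(\gamma+1)}(an)^{1/(\gamma+1)}
= \Big(\tfrac{B^{2}V_{\cF}(1+\log n)}{\lambda-1}\Big)^{\gamma/(\gamma+1)} a^{1/(\gamma+1)}\, n^{-(\gamma-1)/(\gamma+1)},
\]
and a crude bound $(1+\log n)^{\gamma/(\gamma+1)}\le 1+\log n$ together with $x^{\gamma/(\gamma+1)}y^{1/(\gamma+1)}\le x+y$ (Young's inequality, or just splitting into the two regimes $x\ge y$ and $x<y$) collapses the right-hand side into the stated form $\frac{C}{n^{(\gamma-1)/(\gamma+1)}}\big(\frac{B^{2}V_{\cF}}{\lambda-1}(1+\log n)+a\big)$ with a universal $C$ (absorbing $C_{c',\lambda'}$ and the $\gamma$-dependent constants, which are uniformly bounded for $\gamma>1$). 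Finally the homogenization reduction to $B=1/4$ used in Theorem \ref{theweaerrestbet} carries over verbatim, so it suffices to have argued the $B=1/4$ case and then rescale.

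I would actually take $x:=\lceil n^{2/(\gamma+1)}\rceil$ (matching the hint $x:=\lceil n^{2/\gamma+1}\rceil$ in the paragraph preceding the statement, up to the obvious reading of the exponent) rather than the exact optimizer: this is slightly suboptimal in the $\log n$ factor but keeps the computation clean, lands in $[1,n]$ for $n$ large, and still yields the claimed $n^{-(\gamma-1)/(\gamma+1)}$ rate after noting $n\,x^{-\gamma}=O(n^{1-2\gamma/(\gamma+1)})=O(n^{-(\gamma-1)/(\gamma+1)})$ and $\alpha_{\lambda,n}x=O((\log n/n)\,n^{2/(\gamma+1)})=O((\log n)\,n^{-(\gamma-1)/(\gamma+1)})$. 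The only mildly delicate point is bookkeeping: one must check that the chosen $m=x$ satisfies the admissibility constraint $\lfloor n/m\rfloor\ge\exp((c^{2}-71)/(4V_{\cF}))$ from \eqref{equhyplamcdelthe31bargobbetmix}, but with $c=\sqrt{71}$ the right-hand side is $\exp(0)=1$, so the constraint is simply $\lfloor n/m\rfloor\ge 1$, i.e.\ $m\le n$, which holds for $n$ large; this is exactly why the specific value $c=\sqrt{71}$ (equivalently, the ceiling on $\lambda$ in \eqref{equhyplamcdelthe31bargobbetmixpol}) is chosen, and it is the one bookkeeping subtlety worth stating explicitly. Everything else is the elementary estimation promised at the end of the proof of Proposition \ref{corwearatsubexpmix}.
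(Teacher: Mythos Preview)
Your proposal is correct and follows essentially the same route as the paper: the paper's proof is the one-line remark preceding the statement, namely to mimic Proposition \ref{corwearatsubexpmix} with the choice $m=\lceil n^{2/(\gamma+1)}\rceil$ and the polynomial bound \eqref{equdefpolmix}, which is exactly what you do (including the observation that $c=\sqrt{71}$ reduces the admissibility constraint to $m\leq n$). The detour through the exact minimizer and Young's inequality is unnecessary once you commit to $m=\lceil n^{2/(\gamma+1)}\rceil$, since the two terms already sum directly to the stated form; and the homogenization remark is superfluous, as Theorem \ref{theweaerrestbet} is applied for general $B$.
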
 
Notice that, in Corollaries \ref{corwearatsubexpmix} and \ref{corwearatsubpolmix}, we recover the rates of the independent case by letting $\gamma\to\infty$.

\bibliographystyle{alpha}
\bibliography{notesonnonparametricregressiondepcas}
\end{document}